    \newtheorem{theorem}{Theorem}[section]
    \newtheorem{conj}[theorem]{Conjecture}
    \newtheorem{prop}[theorem]{Proposition}
    \theoremstyle{definition}
    \newtheorem{ex}[theorem]{Example}
        \newcommand{\set}[1]{\left\{#1\right\}}
        \newcommand{\abs}[1]{{\left|#1\right|}}
            \newcommand{\seqq}[2][n]{{#2(#1)_{#1\geq 0}}}
            \newcommand{\seq}[2][n]{{#2(#1)}}
            \DeclareMathOperator{\rank}{rank}
            \DeclareMathOperator{\charp}{char}
            \renewcommand{\char}[1]{\ensuremath{\charp_{#1}}}
            \DeclareMathOperator{\degg}{deg}
            \renewcommand{\deg}{\ensuremath{\degg}}
            \renewcommand{\phi}{\varphi}
            \newcommand{\phic}{\overline{\phi}}
        \let\bset\mathbb
        \newcommand{\oeis}[1]{\cite[\href{http://oeis.org/#1}{#1}]{OEIS}}
\title[Complexity of powers of a constant-recursive sequence]{Complexity of powers of a \\ constant-recursive sequence}
\author{Eric Rowland}
\address{
	Department of Mathematics \\
	Hofstra University \\
	Hempstead, NY \\
	USA
}
\author{Jes\'us Sistos Barr\'on}
\address{
	Department of Mathematics \\
    University of Georgia \\
    Athens, GA \\
    USA
}
\thanks{This project was conducted as part of the 2023 NYC Discrete Math REU, funded by NSF grant DMS-2051026 and Jane Street.}
\begin{document}
    \begin{abstract}
        Constant-recursive sequences are those which satisfy a linear recurrence, so that later terms can be obtained as a linear combination of the previous ones. The rank of a constant-recursive sequence is the minimal number of previous terms required for such a recurrence. For a constant-recursive sequence $\seq{s}$, we study the sequence $\left(\rank \seq{s}^M\right)_{M\geq 1}$. We answer a question of Stinchcombe regarding the complexity of the powers of a constant-recursive sequence when the roots of the characteristic polynomial are not all distinct.
    \end{abstract}
    \maketitle
    \section{Introduction}
        It is well known that the set of constant-recursive sequences is closed under termwise addition and multiplication.
        In particular, if $s(n)_{n \geq 0}$ is constant-recursive and $M$ is a positive integer, then $(s(n)^M)_{n \geq 0}$ is also constant-recursive.
        For example, the Fibonacci sequence $F(n)_{n\geq 0}$ satisfies $F(n+2) = F(n+1) + F(n)$, and its first several powers satisfy
        \begin{align*}
            F(n + 3)^2 &= 2 F(n + 2)^2 + 2 F(n + 1)^2 - F(n)^2 \\
            F(n + 4)^3 &= 3 F(n + 3)^3 + 6 F(n + 2)^3 - 3 F(n + 1)^3 - F(n)^3 \\
            F(n + 5)^4 &= 5 F(n + 4)^4 + 15 F(n + 3)^4 - 15 F(n + 2)^4 - 5 F(n + 1)^4 + F(n)^4.
        \end{align*}
        In this paper, we are interested in the complexity of the $M$-th power of a constant-recursive sequence, measured by the size of its minimal recurrence.
        We refer to the size of the minimal recurrence satisfied by $s(n)_{n \geq 0}$ as its \emph{rank} and denote it by $\rank s$. In general, if $r = \rank s$, then the rank of $(s(n)^M)_{n \geq 0}$ is at most $\binom{M + r - 1}{M}$.
        Shannon~\cite{Shannon1974Exponentials} attributes this upper bound to Van der Poorten, in a paper that seems to have never been published, but it can be obtained from Van der Poorten's short survey~\cite{Poorten1974Recurrences}.

        As we show in Proposition~\ref{Fibonacci powers rank}, the ranks of the powers of the Fibonacci sequence attain this upper bound. However, powers of other sequences do not. For example, Ramanujan \cite{Hirschhorn1995Ramanujan} discovered three sequences $a(n)_{n \geq 0}, b(n)_{n \geq 0}, c(n)_{n \geq 0}$~\cite[\href{http://oeis.org/A051028}{A051028}, \href{http://oeis.org/A051029}{A051029}, \href{http://oeis.org/A051030}{A051030}]{OEIS}, defined by the recurrence $s(n + 3) = 82 s(n + 2) + 82 s(n + 1) - s(n)$ with various initial conditions, that satisfy the `amazing' identity $$a(n)^3 + b(n)^3 = c(n)^3 + (-1)^n.$$ For these sequences, the upper bound $\binom{M + 2}{M}$ yields the sequence $3, 6, 10, 15, \dots$. Yet, the sequence $\left(\rank \seq{a}^M\right)_{M\geq 1}$ is instead $3, 5, 7, 9, \dots$, and similarly for $\seq{b}$ and $\seq{c}$.

        In this paper, we study the question of determining the rank of the $M$-th power of a constant-recursive sequence. Our main result is Theorem \ref{thm:bounds_specific}, which states that if a sequence $\seqq{s}$ has rank $r$ but only $k$ different roots in its characteristic polynomial, then the rank of $(s(n)^M)_{n \geq 0}$ is at most $(r-k)\binom{M+k-1}{M-1} + \binom{M+k-1}{M}$. A related question has been studied by Shannon and Horadam~\cite{Shannon1971Bounds}, Shannon~\cite{Shannon1974Exponentials}, and Stinchcombe~\cite{Stinchcombe1998Exponentials}, namely that of obtaining explicit expressions for the recurrences satisfied by the power sequences. The important assumption in those papers is that the roots of the characteristic polynomial are all different. Our result answers a question posed by Stinchcombe for the case in which the roots are allowed to have multiplicities, and we refine the known bounds on the rank of the power sequences.

        In this paper, we also initiate the study of sequences $\left(\rank \seq{s}^M\right)_{M\geq 1}$ as objects of interest in their own right. We say that a sequence of integers is a {\itshape rank sequence} if it arises as $\left(\rank \seq{s}^M\right)_{M\geq 1}$ for some constant-recursive integer sequence $\seqq{s}$. For each $r \geq 1$, we conjecture that there are only finitely many rank sequences associated with sequences $\seqq{s}$ of rank $r$, and we are interested in enumerating them. For example, the only rank sequence associated with sequences of rank $1$ is the constant sequence $1, 1, 1, \dots$. We prove in Theorem \ref{thm:rank_seqs_for_rank2} that, for general sequences of rank $2$, the number of different rank sequences is $5$. We carry out extensive computational searches in Mathematica for rank sequences associated with sequences of ranks $3$, $4$, and $5$, obtaining the lower bounds $1, 5, 8, 41, 36$ on the number of different rank sequences for $r \in \{1, 2, 3, 4, 5\}$. The results are listed in Table \ref{tab:r2_r3} and in the appendix.

        In Section~\ref{SEC_PREM}, we go over the characteristic polynomial and the relationship between its roots and closed formulas for the terms of a sequence. Later, in Section~\ref{SEC_MULTIPLICITY}, we study the relationship between the multiplicity of the characteristic roots and the rank of product sequences. We use a related argument in Section~\ref{SEC_BOUNDS} to refine the bounds on the rank of power sequences. In Section~\ref{SEC_COMPUTING}, we characterize the different rank sequences for sequences of rank $2$, and list the results of the searches for higher ranks. Lastly, in Section~\ref{SEC_SYMBOLS}, we discuss the use of linear algebraic methods to study the question of describing all rank sequences.
    \section{Background on constant-recursive sequences}\label{SEC_PREM}
        This section introduces some of the notation used throughout the paper and reviews known results about constant-recursive sequences. We also determine the ranks of the powers of the Fibonacci sequence.
        
        First, a constant-recursive sequence is one which satisfies a {\itshape recurrence equation} of the form
        \begin{align*}
            s(n+r) = c_{r-1} s(n+r-1) + \dots + c_1 s(n+1) + c_0 s(n).
        \end{align*}
        for any sufficiently large $n$, where the coefficients $(c_{r-1}, \dots, c_0)$ are all constants and $c_0\neq 0$.
        Every constant-recursive sequence satisfies infinitely many recurrences, but there is only one of minimal size, that we define to be the {\itshape rank} of $\seq{s}$, and denote it by $\rank s$.
        \subsection{Characteristic and exponential polynomials}
            Every sequence has an associated {\itshape characteristic polynomial}, obtained from the list of coefficients of the minimal recurrence. For the tuple $(c_{r-1}, \dots, c_0)$, this polynomial is
            \begin{align}\label{eq:char_poly}
                \char{s}(x) = x^r - (c_{r-1}x^{r-1} + \dots + c_1 x + c_0).
            \end{align}
            Note that the rank of the sequence translates into the degree of the characteristic polynomial. More importantly, the roots of this polynomial can be used to determine a closed formula for every term in the sequence. 
            
            An exponential polynomial is any function of the form
            \begin{align*}
                E(x) = \sum_{j = 1}^{k}P_j(x)\rho_j^x,
            \end{align*}
            where each of the coefficients $P_j(x)$ is a polynomial, and the $\rho_j$ are nonzero complex numbers. One has the following characterization of constant-recursive sequences.
            \begin{theorem}\label{thm:exps_seqs}
                Every exponential polynomial
                \begin{align*}
                    s(n) = \sum_{j=1}^{k} P_j(n)\rho_j^n
                \end{align*}
                generates a constant-recursive sequence, in which the set $\set{\rho_1, \dots, \rho_k}$ is exactly the set of the roots of $\char{s}(x)$, and furthermore, $\deg P_j$ is exactly one less than the multiplicity of $\rho_j$.
            \end{theorem}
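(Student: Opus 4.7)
My plan is to work with the shift operator $E$ on the space of complex-valued sequences, where $(Es)(n) = s(n+1)$. The observation is that a sequence $s$ satisfies the recurrence associated to a polynomial $p(x)$ precisely when $p(E)s = 0$, so the set of annihilating polynomials forms an ideal in $\mathbb{C}[x]$ generated by the characteristic polynomial $\char{s}(x)$. The proof then reduces to understanding the kernel of $p(E)$ for a polynomial $p$ that factors over $\mathbb{C}$.

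First I would establish the key computation: for any nonzero $\rho \in \mathbb{C}$ and any nonnegative integer $i$, the operator $E - \rho$ sends $n^i \rho^n$ to a sequence of the form $Q(n)\rho^n$ with $\deg Q = i-1$. Iterating, $(E-\rho)^{i+1}$ annihilates $n^i\rho^n$. Consequently, if $s(n) = \sum_{j=1}^k P_j(n)\rho_j^n$ with $\deg P_j = d_j$, then $s$ is annihilated by $\prod_{j=1}^k (x - \rho_j)^{d_j+1}$, proving the first half of the theorem: $s$ is constant-recursive, and every $\rho_j$ appears as a root of $\char{s}(x)$ with multiplicity at most $d_j+1$.

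Next I would prove the reverse containment: $\ker (E-\rho)^m$ is exactly the $m$-dimensional subspace spanned by $\rho^n, n\rho^n, \dots, n^{m-1}\rho^n$. The spanning is immediate from the above, and the dimension bound can be obtained by viewing $(E-\rho)^m$ as an explicit order-$m$ recurrence, whose solutions are determined by the first $m$ values and hence form a space of dimension at most $m$. Because distinct $\rho_j$ produce coprime factors $(x-\rho_j)^{m_j}$, the kernel of $\prod_j (x-\rho_j)^{m_j}$ decomposes as the direct sum of the $\ker(E-\rho_j)^{m_j}$; in particular, combining the bases across $j$ yields a linearly independent family $\{n^i \rho_j^n\}$. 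The main obstacle here is verifying this directness carefully, which I would do via a standard partial-fractions argument in $\mathbb{C}[x]$: using $1 = \sum_j a_j(x) \prod_{l\neq j}(x-\rho_l)^{m_l}$ for suitable $a_j(x)$, one projects any kernel element onto its components.

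Finally, to pin down that the multiplicities of the roots of $\char{s}(x)$ are exactly $d_j + 1$, I would write $\char{s}(x) = \prod_{l}(x - \sigma_l)^{m_l}$ and use the kernel decomposition to express $s(n) = \sum_l Q_l(n)\sigma_l^n$ with $\deg Q_l \leq m_l - 1$. The global linear independence established above forces this expression to agree term by term with the hypothesized $\sum_j P_j(n)\rho_j^n$, so $\{\sigma_l\} = \{\rho_j\}$ and $Q_j = P_j$. Minimality of $\char{s}(x)$ then forces $m_j = \deg P_j + 1$, for if $m_j$ were smaller the inclusion $Q_j \in \ker(E-\sigma_j)^{m_j-1}$ applied to $s$ would contradict $\deg P_j = m_j - 1$, and if $m_j$ were larger the characteristic polynomial would fail to be minimal. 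This completes the equality of root sets and multiplicities.
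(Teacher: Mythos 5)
The paper itself offers no proof of this theorem: it is presented in Section~\ref{SEC_PREM} as known background (the classical characterization of constant-recursive sequences via exponential polynomials), so there is no argument of the authors' to compare yours against. Judged on its own merits, your proof is correct, and it is the standard operator-theoretic argument: realize annihilating polynomials as an ideal of $\mathbb{C}[x]$ acting through the shift $E$, check that $(E-\rho)$ drops the degree of the polynomial coefficient of $\rho^n$ by one (so $\prod_j (x-\rho_j)^{\deg P_j + 1}$ annihilates $s$, giving constant-recursiveness and the divisibility $\char{s}(x) \mid \prod_j (x-\rho_j)^{\deg P_j+1}$; note the constant term of this product is nonzero precisely because each $\rho_j \neq 0$, as the paper's definition of a recurrence requires), identify $\ker(E-\rho)^m$ exactly via the dimension bound, obtain the direct-sum decomposition of the kernel by coprimality, and deduce linear independence of $\set{n^i\rho_j^n}$, hence uniqueness of the representation and the exact multiplicities.

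A few places where your wording should be tightened, none of which is a conceptual gap. First, after the key computation you assert that ``every $\rho_j$ appears as a root of $\char{s}(x)$ with multiplicity at most $d_j+1$''; at that stage you only know that the roots of $\char{s}(x)$ lie \emph{among} the $\rho_j$ with those multiplicity bounds. That each $\rho_j$ genuinely occurs needs the independence established later, together with the hypothesis (implicit in the theorem, and worth stating) that each $P_j$ is not the zero polynomial. Second, ``the spanning is immediate from the above'' is backwards: what is immediate is \emph{membership} of $n^i\rho^n$ in the kernel; spanning follows from linear independence of these $m$ sequences (a nonzero polynomial of degree less than $m$ has finitely many roots) combined with your dimension bound. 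Third, the paper defines recurrences as holding only for sufficiently large $n$, whereas your ideal consists of exact annihilators; for exponential polynomials with nonzero $\rho_j$ the two notions coincide, because a nonzero exponential polynomial cannot vanish on a tail --- your independence argument, applied to sequences indexed from an arbitrary starting point, gives exactly this --- and a sentence acknowledging that alignment would make the proof fully match the paper's definitions. Finally, in the last paragraph the phrase ``would contradict $\deg P_j = m_j - 1$'' is circular as written (that equality is the conclusion being proved); what you mean is that $Q_j = P_j$ and $\deg Q_j \leq m_j - 1$ force $m_j \geq \deg P_j + 1$, while the divisibility from the first half forces $m_j \leq \deg P_j + 1$.
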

            For a given root $\rho$ of $\char{s}(x)$, we denote its multiplicity by $m_s(\rho)$ (if the context makes it clear, we may simply write $m(\rho)$). Because the first terms of a constant-recursive sequence do not necessarily satisfy a strict recurrence, there is a one-to-many relationship between exponential polynomials and constant-recursive sequences. The following theorem establishes the connection in the opposite sense. 
            
            \begin{theorem}\label{thm:seqs_exps}
                Let $\seqq{s}$ be constant-recursive. Then, there is a unique exponential polynomial such that
                \begin{align*}
                    s(n) = \sum_{j=1}^{k} P_j(n)\rho_j^n
                \end{align*}
                for sufficiently large $n$.
            \end{theorem}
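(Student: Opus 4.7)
My plan is to derive both existence and uniqueness from a dimension count together with the classical linear independence of the functions $n^i\rho_j^n$. First I would reduce to the case in which $s$ satisfies its minimal recurrence from $n=0$: choose $n_0$ so that $s(n+r)=c_{r-1}s(n+r-1)+\cdots+c_0 s(n)$ for every $n\geq n_0$, and pass to the shifted sequence $\tilde s(n)=s(n+n_0)$. Let $V$ denote the vector space of all sequences satisfying this recurrence from $n=0$; since any such sequence is determined by its first $r$ values, $\dim V = r = \sum_{j=1}^k m_s(\rho_j)$. Let $W$ be the space of tuples $(P_1,\ldots,P_k)$ of polynomials with $\deg P_j \leq m_s(\rho_j)-1$, which also has dimension $r$. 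By Theorem~\ref{thm:exps_seqs}, for each such tuple the sequence $\Phi(P_1,\ldots,P_k)(n) := \sum_{j=1}^k P_j(n)\rho_j^n$ is constant-recursive with a characteristic polynomial dividing $\char{s}(x)$, so $\Phi(P_1,\ldots,P_k) \in V$. This defines a linear map $\Phi\colon W \to V$ between spaces of equal dimension.

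The main obstacle I expect is showing that $\Phi$ is injective, equivalently that the $r$ sequences $(n^i\rho_j^n)_{n\geq 0}$ for $1\leq j\leq k$ and $0\leq i\leq m_s(\rho_j)-1$ are linearly independent. I would handle this by a generating function argument: the ordinary generating function of $(n^i\rho_j^n)_{n\geq 0}$ is a rational function with a pole of order $i+1$ at $z=1/\rho_j$ and no other singularity, so if $\sum_{j,i} c_{j,i}\, n^i\rho_j^n = 0$ for every $n\geq 0$, then the rational function obtained by summing the corresponding generating functions vanishes identically. Since the $\rho_j$ are distinct and nonzero, the poles at the distinct points $z = 1/\rho_j$ cannot cancel across different $j$, so the polar part contributed by each $j$ must vanish separately, and comparing pole orders then forces $c_{j,i}=0$ for every $i$.

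Once $\Phi$ is known to be a bijection, I would apply its inverse to $\tilde s \in V$ to obtain the unique tuple $(\tilde P_1,\ldots,\tilde P_k) \in W$ with $\tilde s(n) = \sum_j \tilde P_j(n)\rho_j^n$ for all $n\geq 0$, and then undo the shift by setting $P_j(n) = \rho_j^{-n_0}\tilde P_j(n-n_0)$, so that $s(n) = \sum_j P_j(n)\rho_j^n$ for every $n\geq n_0$. For uniqueness of the exponential polynomial representing $s$ without an a priori degree restriction, if $\sum_j P_j(n)\rho_j^n = \sum_j Q_j(n)\rho_j^n$ on a cofinite set of $n$, then $\sum_j(P_j-Q_j)(n)\rho_j^n$ vanishes on that set and hence, being itself an exponential polynomial determined by finitely many values, vanishes for all $n$; the linear independence established above then forces $P_j=Q_j$ for each $j$.
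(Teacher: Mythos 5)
The paper never proves this statement: Theorem~\ref{thm:seqs_exps} appears in the background section (Section~\ref{SEC_PREM}) as a known characterization of constant-recursive sequences, cited without proof. So there is no paper argument to compare against; your proposal has to stand on its own, and it does. The structure is sound: shifting to kill the pre-periodic part, the dimension count $\dim V = r = \dim W$, the linear map $\Phi$, injectivity via linear independence of the sequences $(n^i\rho_j^n)_{n\geq 0}$, and then surjectivity for free by rank--nullity. Your generating-function argument for independence is correct: the series $\sum_{n\geq 0} n^i\rho_j^n z^n$ is a proper rational function whose unique pole, at $z=1/\rho_j$, has order exactly $i+1$, so polar parts at the distinct points $1/\rho_j$ cannot interact, and within a fixed $j$ the term of highest $i$ with nonzero coefficient would produce an uncancellable pole. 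The unshifting step $P_j(n)=\rho_j^{-n_0}\tilde P_j(n-n_0)$ is also handled correctly.

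Two minor points you should tighten. First, in the step $\Phi(W)\subseteq V$ you pass from ``the minimal characteristic polynomial of $\sum_j P_j(n)\rho_j^n$ divides $\char{s}(x)$'' to ``the sequence satisfies the recurrence attached to $\char{s}(x)$ from $n=0$''; this is true but is not literally what Theorem~\ref{thm:exps_seqs} says, so you should state the standard fact that a sequence satisfying a recurrence with characteristic polynomial $p(x)$ satisfies every recurrence whose characteristic polynomial is a multiple of $p(x)$ (apply the extra shift factors to the zero sequence). Second, your final uniqueness paragraph tacitly assumes both exponential polynomials are written over the same root set $\{\rho_j\}$; for the theorem as stated you must allow the two representations to use different roots. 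The fix is routine --- write both over the union of the two root sets, padding with zero polynomials --- and your independence argument, which works for any finite collection of distinct nonzero $\rho$'s and any degrees, then forces all coefficient polynomials of the difference to vanish. With those two clarifications the proof is complete, and it supplies a self-contained justification for a result the paper simply quotes.
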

        The closure of constant-recursive sequences under element-wise multiplication was shown algebraically by Van der Poorten~\cite{Poorten1974Recurrences}, and later by Stinchcombe~\cite{Stinchcombe1998Exponentials}, by looking at sequences through their exponential polynomials. For completeness we provide the short proof.
        \begin{theorem}
            Let $\seqq{s}, \seqq{t}$ be constant-recursive sequences. Then, the product sequence $\seq{st} = \seq{s}\seq{t}$ is also constant-recursive.
        \end{theorem}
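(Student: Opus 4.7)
The plan is to prove this by combining Theorems \ref{thm:seqs_exps} and \ref{thm:exps_seqs}: first pass from the given sequences to their exponential-polynomial representations, multiply them out as functions of $n$, and then pass back to obtain a constant-recursive sequence.

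More concretely, I would start by applying Theorem \ref{thm:seqs_exps} to write, for sufficiently large $n$,
\begin{align*}
    s(n) = \sum_{i=1}^{k} P_i(n)\rho_i^n, \qquad t(n) = \sum_{j=1}^{\ell} Q_j(n)\sigma_j^n,
\end{align*}
where the $\rho_i$ and $\sigma_j$ are nonzero complex numbers and the $P_i, Q_j$ are polynomials. Then I would compute
\begin{align*}
    s(n)t(n) = \sum_{i=1}^{k}\sum_{j=1}^{\ell} P_i(n)Q_j(n)(\rho_i\sigma_j)^n.
\end{align*}
Each product $P_i(n)Q_j(n)$ is a polynomial in $n$, and each product $\rho_i\sigma_j$ is a nonzero complex number. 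Hence the right-hand side is an exponential polynomial in $n$.

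Next, I would group together any pairs $(i,j)$ and $(i',j')$ with $\rho_i\sigma_j = \rho_{i'}\sigma_{j'}$ by adding the corresponding polynomial coefficients, so as to put the expression in the canonical form required by Theorem~\ref{thm:exps_seqs}, which then implies that the sequence $\seq{s}\seq{t}$, at least for sufficiently large $n$, agrees with a constant-recursive sequence. The only mildly delicate point, and the one I would be most careful about, is the phrase \emph{sufficiently large}: the exponential-polynomial formula need not hold on the first few terms of $\seq{s}$ or $\seq{t}$, so strictly speaking we only obtain a recurrence that holds eventually. Since the definition of constant-recursive in Section~\ref{SEC_PREM} only requires the recurrence to hold for $n$ sufficiently large, this matches the definition, and the proof is complete.
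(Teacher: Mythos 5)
Your proposal is correct and follows essentially the same route as the paper's proof: express both sequences as exponential polynomials via Theorem~\ref{thm:seqs_exps}, multiply to obtain a new exponential polynomial, and invoke Theorem~\ref{thm:exps_seqs}. The only difference is that you are more explicit about grouping terms with equal root products and about the ``sufficiently large $n$'' caveat, both of which the paper leaves implicit.
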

        \begin{proof}
            Let $\seq{s} = \sum_{i} P_i(n)\rho_i^n$ and $\seq{t} = \sum_{j} Q_j(n)\varrho_j^n$ be the exponential polynomials for $\seqq{s}$ and $\seqq{t}$, respectively. Then,
            \begin{align*}
                \seq{s}\seq{t} = \sum_{i, j} P_i(n)Q_j(n) \cdot(\rho_i \varrho_j)^n .
            \end{align*}
            Which is, again, an exponential polynomial. By Theorem~\ref{thm:exps_seqs}, the product sequence $\seq{s}\seq{t}$ is constant-recursive.
        \end{proof}
         Moreover, Theorem~\ref{thm:exps_seqs} gives us information about the minimal recurrence satisfied by the product sequence, and the roots and multiplicities of its characteristic polynomial. We expand on this in the next section. For now, the expansion of the product of exponential polynomials allows us to settle the question about the rank of the powers of the Fibonacci sequence.
        \begin{prop}\label{Fibonacci powers rank}
            For each positive integer $M$, the rank of the sequence $\seqq{F^M}$ is exactly $M+1$. In other words, the rank sequence of the Fibonacci numbers is $2, 3, 4, 5, 6, \dots$.
        \end{prop}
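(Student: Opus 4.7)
The plan is to write down the exponential polynomial representation of $F^M$ explicitly and then apply Theorem~\ref{thm:exps_seqs} to read off the rank. Since the characteristic polynomial of the Fibonacci recurrence $F(n+2)=F(n+1)+F(n)$ is $x^2-x-1$, with distinct roots $\phi=(1+\sqrt{5})/2$ and $\phic=(1-\sqrt{5})/2$, we can write $F(n)=A\phi^n+B\phic^n$ for the nonzero constants $A=1/\sqrt{5}$ and $B=-1/\sqrt{5}$ coming from Binet's formula.

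Next, I would expand
\begin{align*}
    F(n)^M = \bigl(A\phi^n+B\phic^n\bigr)^M = \sum_{k=0}^{M}\binom{M}{k}A^k B^{M-k}\bigl(\phi^k\phic^{M-k}\bigr)^n,
\end{align*}
which presents $F(n)^M$ as an exponential polynomial with $M+1$ terms whose bases are $\rho_k:=\phi^k\phic^{M-k}$ for $k=0,1,\dots,M$ and whose coefficients $\binom{M}{k}A^k B^{M-k}$ are all nonzero.

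The core of the argument is then to verify that the $M+1$ bases $\rho_k$ are pairwise distinct. If $\rho_k=\rho_j$ with $k>j$, then $(\phi/\phic)^{k-j}=1$; but $\phi\phic=-1$, so $\phi/\phic=-\phi^2$, which has absolute value $\phi^2>1$ and therefore cannot be a root of unity. Hence all the $\rho_k$ are distinct, and by Theorem~\ref{thm:exps_seqs} (together with Theorem~\ref{thm:seqs_exps} for uniqueness) the characteristic polynomial of $F^M$ has exactly the $M+1$ distinct roots $\rho_0,\dots,\rho_M$, each with multiplicity $1$. Therefore $\rank F^M=M+1$, and the rank sequence of the Fibonacci numbers is $2,3,4,5,6,\dots$.

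The only substantive step is the distinctness of the $\rho_k$'s; everything else is a direct binomial expansion plus an invocation of the earlier theorems. No further obstacle is anticipated, since the non-vanishing of the coefficients is immediate from $A,B\neq 0$ and the non-vanishing of binomial coefficients.
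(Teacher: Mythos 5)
Your proposal is correct and follows essentially the same route as the paper: write $F(n)$ via Binet's formula, expand the $M$-th power into an exponential polynomial with bases $\phi^k\phic^{M-k}$, show these bases are pairwise distinct by an absolute-value argument (the paper uses $\abs{\phi}>1>\abs{\phic}$ directly, you use $\abs{\phi/\phic}=\phi^2>1$, which is the same idea), and invoke Theorem~\ref{thm:exps_seqs} to read off the rank. Your write-up is, if anything, slightly more careful than the paper's, since you make the binomial coefficients explicit and note their non-vanishing, whereas the paper only records the coefficients as $\pm 1/\sqrt{5}^M$ after expanding the product.
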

        \begin{proof}
            The Fibonacci sequence has exponential polynomial $F(n) = \frac{1}{\sqrt{5}}\left(\phi^n - \phic^n\right)$, where $\phi = \frac{1 + \sqrt{5}}{2}$ and $\phic = \frac{1-\sqrt{5}}{2}.$ Expanding $\seq{F^M} =\prod_{i = 1}^{M} \frac{1}{\sqrt{5}}\left(\phi^n - \phic^n\right)$ yields an exponential polynomial with summands of the form $C\phi^j \phic^{M-j}$, where $C = \pm \frac{1}{\sqrt{5}^M}$, depending on the parity of $M-j$. Any two of these terms can be added together if and only if there is some $J$ for which $\phi^J = \phic^J$, but this is never true since $\abs{\phi}>1>\abs{\phic}$. Hence, the exponential polynomial does not simplify, and thus by Theorem~\ref{thm:exps_seqs}, the rank of $\seqq{F^M}$ is exactly $M+1$.
        \end{proof}

        Alternatively, it is enough to show that the determinant of the $M \times M$ matrix whose $(i, j)$ entry is $F(i + j - 2)^M$ is nonzero, since that implies there are no linear relations among the first $M$ shifts of $F(n)^M$. This was proven by Carlitz in \cite{Carlitz1966Determinants}. The sequence of determinants for each $M$ appears in the OEIS \oeis{A265944}.
    \section{The influence of multiplicity on the rank}\label{SEC_MULTIPLICITY}
        Here, for sequences $\seq{s}, \seq{t}$, we show how the multiplicity of the roots in the characteristic polynomials influence the rank of the product sequence $\seq{s}\seq{t}$. We know by Theorem \ref{thm:exps_seqs} that there is, at first, a relationship between the multiplicity of a root and the degree of the coefficient in the exponential polynomial.
        
        \begin{prop}\label{prop:multiplicity_rules}
            Let $\rho_1$ be a root of $\char{s}(x)$ with multiplicity $m_s(\rho_1)$, and $\rho_2$ be a root of $\char{t}(x)$ with multiplicity $m_t(\rho_2)$. If there are no other roots $\varrho_1, \varrho_2$ of $\char{s}(x), \char{t}(x)$, respectively, such that $\rho_1\rho_2 = \varrho_1\varrho_2$, then $\rho_1\rho_2$ is a root of $\char{st}(x)$ with a multiplicity equal to $m_{st}(\rho_1\rho_2) = m_s(\rho_1) + m_t(\rho_2) - 1$.
        \end{prop}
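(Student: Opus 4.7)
The plan is to work entirely at the level of exponential polynomials and apply Theorems~\ref{thm:exps_seqs} and~\ref{thm:seqs_exps} to translate between the analytic and recurrence-theoretic pictures. By Theorem~\ref{thm:seqs_exps}, write
\begin{align*}
    s(n) = \sum_{i} P_i(n)\rho_i^n \quad \text{and} \quad t(n) = \sum_{j} Q_j(n)\varrho_j^n,
\end{align*}
where, by Theorem~\ref{thm:exps_seqs}, $\deg P_i = m_s(\rho_i) - 1$ and $\deg Q_j = m_t(\varrho_j) - 1$. In particular, if $\rho_1$ and $\rho_2$ correspond to the distinguished roots in the statement, then $\deg P_1 = m_s(\rho_1) - 1$ and $\deg Q_2 = m_t(\rho_2) - 1$, and both $P_1$ and $Q_2$ are nonzero polynomials.

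Next, I would expand the product
\begin{align*}
    s(n)t(n) = \sum_{i,j} P_i(n)Q_j(n) \cdot (\rho_i \varrho_j)^n
\end{align*}
and collect terms according to the value of the base $\rho_i \varrho_j$. After collection, this is again an exponential polynomial, and by Theorem~\ref{thm:seqs_exps} it is the unique exponential polynomial representing $\seq{st}$. The hypothesis that no other pair $(\varrho_1, \varrho_2)$ of roots of $\char{s}$ and $\char{t}$ satisfies $\varrho_1 \varrho_2 = \rho_1 \rho_2$ means that exactly one term in the double sum contributes to the base $\rho_1\rho_2$, namely $P_1(n)Q_2(n) \cdot (\rho_1 \rho_2)^n$. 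No cancellation is possible because $P_1$ and $Q_2$ are nonzero, so their product is a nonzero polynomial of degree $\deg P_1 + \deg Q_2$.

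Finally, I would invoke Theorem~\ref{thm:exps_seqs} in the other direction: since the coefficient of $(\rho_1 \rho_2)^n$ in the unique exponential polynomial for $\seq{st}$ is the nonzero polynomial $P_1(n)Q_2(n)$, the number $\rho_1 \rho_2$ is a root of $\char{st}(x)$ with multiplicity
\begin{align*}
    m_{st}(\rho_1\rho_2) = \deg(P_1 Q_2) + 1 = (m_s(\rho_1) - 1) + (m_t(\rho_2) - 1) + 1 = m_s(\rho_1) + m_t(\rho_2) - 1,
\end{align*}
which is exactly the conclusion. The only conceptual subtlety — and the one point where a reader might ask for care — is the appeal to uniqueness in Theorem~\ref{thm:seqs_exps} to justify that after grouping we have the genuine exponential polynomial of $\seq{st}$, so that Theorem~\ref{thm:exps_seqs} can be applied to read off multiplicities; the ``no-collision'' hypothesis is what guarantees that this single summand cannot be diminished in degree by cancellation against any other contribution.
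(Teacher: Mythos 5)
Your proof is correct and follows essentially the same approach as the paper's: expand the product of the two exponential polynomials, use the no-collision hypothesis to isolate $P_1(n)Q_2(n)\cdot(\rho_1\rho_2)^n$ as the unique contribution to that base, and read off the multiplicity as $\deg(P_1Q_2)+1$ via Theorem~\ref{thm:exps_seqs}. Your added care in citing the uniqueness from Theorem~\ref{thm:seqs_exps} and noting that $P_1Q_2$ is a nonzero polynomial makes the argument slightly more explicit than the paper's, but it is the same proof.
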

        \begin{proof}
            Let $P_1(x), Q_2(x)$ be the respective polynomial coefficients in the exponential polynomials. We have that $m_s({\rho_1}) = \deg P_1 + 1$ and $m_t({\rho_2}) = \deg Q_2 + 1$. Now, the term $({\rho_1}{\rho_2})^n$ does not arise from any other pair of roots, hence its coefficient will be exactly $(P_1Q_2)(x)$. Since $$\deg P_1 Q_2 = \deg P_1 + \deg Q_2,$$ then $m_{st}({\rho_1}{\rho_2}) = \deg P_1 Q_2 + 1 = m_s({\rho_1}) + m_t({\rho_2}) - 1$.
        \end{proof}
        For example, when two roots $\rho_1, \rho_2$ have multiplicity $1$, the resulting root $\rho_1\rho_2$ of the product sequence will also have multiplicity $1$, as occurred with the exponential polynomials of the powers of the Fibonacci sequence.

        When there are different pairs of roots with a shared product, we need to pay more attention to the particular coefficients, as sometimes they may cause the exponential polynomial to simplify, a phenomenon which we illustrate in Example \ref{ex:fibo_luc} later on. An analogous argument extrapolates this to products of more than $2$ sequences at a time.
        \begin{prop}\label{prop:multiplicity_rules_general}
            Let $\seq{s_1}, \dots, \seq{s_\ell}$ be constant-recursive sequences. For each $1\leq i \leq \ell$, let $\rho_i$ be a root of $\char{s_i} (x)$ with multiplicity $m_{s_i}(\rho_i)$. If there is no other set $\set{\rho_i'}$ (where each $\rho_i'$ is a root of $\char{s_i} (x)$) such that $\prod \rho_i = \prod \rho_i'$, then $\prod \rho_i$ is a root of $\char{\prod s_i} (x)$ with multiplicity $1 - \ell + \sum m_{s_i}(\rho_i).$
        \end{prop}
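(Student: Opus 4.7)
The plan is to mirror the proof of Proposition~\ref{prop:multiplicity_rules}, but directly for $\ell$ factors rather than two. First I would write each $s_i$ via its exponential polynomial from Theorem~\ref{thm:seqs_exps}, say $s_i(n) = \sum_{j} P_{i,j}(n) \rho_{i,j}^n$, where one of the indices $j$ picks out the distinguished root $\rho_i$ with associated polynomial $P_i$ satisfying $\deg P_i = m_{s_i}(\rho_i) - 1$.

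Next I would expand the product $\prod_{i=1}^\ell s_i(n)$ by multiplying out these finite sums. Distributing gives a single exponential polynomial whose summands are indexed by tuples $(\rho_1', \ldots, \rho_\ell')$ of choices of roots, with the tuple contributing a term of the form $\bigl(\prod_i \tilde{P}_i(n)\bigr)\bigl(\prod_i \rho_i'\bigr)^n$. Collecting those summands whose products $\prod_i \rho_i'$ coincide with $\prod_i \rho_i$, the hypothesis of the proposition guarantees that only the original tuple $(\rho_1, \ldots, \rho_\ell)$ survives, so the coefficient of $\bigl(\prod_i \rho_i\bigr)^n$ in the expanded exponential polynomial is exactly $\prod_{i=1}^\ell P_i(n)$.

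Since $\mathbb{C}[x]$ is an integral domain and each $P_i$ is nonzero, this coefficient is a nonzero polynomial of degree
\begin{align*}
\deg \prod_{i=1}^\ell P_i \;=\; \sum_{i=1}^\ell \deg P_i \;=\; \sum_{i=1}^\ell \bigl(m_{s_i}(\rho_i) - 1\bigr) \;=\; -\ell + \sum_{i=1}^\ell m_{s_i}(\rho_i).
\end{align*}
Applying Theorem~\ref{thm:exps_seqs} to the resulting exponential polynomial for $\prod_i s_i$ then identifies $\prod_i \rho_i$ as a root of $\char{\prod s_i}(x)$ whose multiplicity is one more than the degree of its coefficient polynomial, yielding the claimed value $1 - \ell + \sum_i m_{s_i}(\rho_i)$.

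The only real subtlety — and the step I expect to be the main obstacle to state cleanly — is verifying that after distributing, the surviving summand $\prod_i P_i(n)$ indexed by the tuple $(\rho_1,\ldots,\rho_\ell)$ truly does not collide with any other product of roots; this is exactly what the uniqueness hypothesis on $\prod \rho_i = \prod \rho_i'$ is there to ensure, and no further cancellation is possible because each $P_i$ is nonzero and the polynomial ring is an integral domain. An alternative route is induction on $\ell$ using Proposition~\ref{prop:multiplicity_rules}, but propagating the non-collision hypothesis through the induction is no simpler than the direct argument above.
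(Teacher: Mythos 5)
Your proof is correct and is precisely the argument the paper intends: the paper states this proposition without a written proof, remarking only that ``an analogous argument'' to Proposition~\ref{prop:multiplicity_rules} extrapolates to products of more than two sequences, and your writeup is exactly that generalization (expand the $\ell$ exponential polynomials, use the non-collision hypothesis to isolate $\prod_{i=1}^{\ell} P_i(n)$ as the coefficient of $\bigl(\prod_{i=1}^{\ell}\rho_i\bigr)^n$, and apply Theorem~\ref{thm:exps_seqs} to read off the multiplicity as one plus the degree of that coefficient). Your added observations --- that each $P_i$ is nonzero so the product polynomial cannot vanish, and that its degree is the sum of the degrees --- are the same facts the paper uses in the two-factor case, so there is no gap.
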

        For now, our focus is only on bounding the rank from above. The next theorem states, informally, that for purposes of maximizing the rank, it is better to have many different roots, rather than a few with large multiplicities.
        \begin{theorem}\label{thm:product_rank_general}
            Let $\seqq{s}, \seqq{t}$ be constant-recursive sequences of ranks $r_1$ and $r_2$, respectively. The rank of $\seqq{st}$ is at most $r_1r_2$, with equality achieved only if, for at least one of $\char{s}(x), \char{t}(x)$, all of the roots are different.
        \end{theorem}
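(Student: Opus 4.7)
My plan is to work with the exponential polynomial representations provided by Theorem~\ref{thm:seqs_exps}, expand the product, and bound the rank of $\seqq{st}$ by counting degrees of freedom in the coefficient polynomials while allowing for possible collisions $\rho_i\varrho_j = \rho_{i'}\varrho_{j'}$. To set up, I would write $s(n) = \sum_{i=1}^k P_i(n)\rho_i^n$ and $t(n) = \sum_{j=1}^\ell Q_j(n)\varrho_j^n$, where $k$ and $\ell$ denote the number of distinct roots of $\char{s}(x)$ and $\char{t}(x)$. Setting $d_i = \deg P_i$ and $e_j = \deg Q_j$, Theorem~\ref{thm:exps_seqs} gives $r_1 = k + \sum_i d_i$ and $r_2 = \ell + \sum_j e_j$. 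Expanding yields
\begin{align*}
    (st)(n) = \sum_{i,j} P_i(n) Q_j(n) (\rho_i\varrho_j)^n,
\end{align*}
and after grouping terms with a common product $\mu = \rho_i\varrho_j$, the polynomial coefficient of $\mu^n$ is a sum of products $P_i Q_j$, hence has degree at most $\max(d_i + e_j)$ taken over the contributing pairs.

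By Theorem~\ref{thm:exps_seqs}, $\rank(st)$ equals the sum over distinct products $\mu$ of one plus this coefficient degree. Replacing each $\max$ over contributing pairs by the full sum over those pairs yields the upper bound
\begin{align*}
    \rank(st) \leq \sum_{i=1}^k \sum_{j=1}^\ell (d_i + e_j + 1) = \ell(r_1 - k) + k(r_2 - \ell) + k\ell = \ell r_1 + k r_2 - k\ell.
\end{align*}
A short calculation then gives $r_1 r_2 - (\ell r_1 + k r_2 - k\ell) = (r_1 - k)(r_2 - \ell) \geq 0$, which proves the rank bound $\rank(st) \leq r_1 r_2$. Moreover, this computation shows the inequality is strict whenever $k < r_1$ and $\ell < r_2$, so equality with $r_1 r_2$ forces $k = r_1$ or $\ell = r_2$, that is, at least one of the characteristic polynomials must have all distinct roots.

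The only delicate point is the step where I replace the degree of a sum of polynomials by the maximum of their individual degrees, and then by the sum of those degrees; this merely gives an upper bound on $\rank(st)$, which is all that is needed here, since any accidental cancellation among the $P_i Q_j$'s can only decrease the rank further and thus can only strengthen the inequality. I do not expect any essential obstacle beyond this bookkeeping, and the argument is really a clean combinatorial accounting built on top of Theorem~\ref{thm:exps_seqs}.
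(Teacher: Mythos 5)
Your proposal is correct and follows essentially the same route as the paper: expand the product of the exponential polynomials from Theorem~\ref{thm:seqs_exps}, bound the contribution of each product of roots by $d_i + e_j + 1$ (the paper phrases this as the multiplicity $m_s(\rho_i) + m_t(\varrho_j) - 1$ from Proposition~\ref{prop:multiplicity_rules}), and sum over all pairs to get $k_2 r_1 + k_1 r_2 - k_1 k_2 = r_1 r_2 - (r_1 - k_1)(r_2 - k_2)$, from which both the bound and the equality condition follow. If anything, your handling of colliding products (bounding the degree of a sum of coefficient polynomials by the maximum, then by the sum over contributing pairs) is spelled out more carefully than in the paper, which simply asserts that the rank is maximized when all products are distinct.
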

        \begin{proof}
            Let $\rho_1, \dots, \rho_{k_1}$ be the distinct roots of $\char{s}(x)$, and let $\varrho_1, \dots, \varrho_{k_2}$ be the distinct roots of $\char{t}(x)$. Expanding the product of the two exponential polynomials yields another exponential polynomial with summands containing powers of the characteristic roots of the form $\rho_i \varrho_j$. The rank is maximized when these are all different, because then each of the expected multiplicities $m_s(\rho_i) + m_t(\varrho_j) - 1$ given by Proposition~\ref{prop:multiplicity_rules} contributes towards the total rank of the product sequence.

            The term $m_s(\rho_i)$ appears $k_2$ times, for each $i$. Similarly, the term $m_t(\varrho_j)$ appears $k_1$ times for each $j$. In total, there are $k_1k_2$ of these terms. Thus, the total sum is at most $k_2\sum \rho_i + k\sum \varrho_j - k_1k_2$, or equivalently, $r_1k_2 + k_1r_2 - k_1k_2 = r_1r_2 - (r_1 - k)(r_2 - k_2)$. Since $r_1\geq k_1$ and $r_2\geq k_2$, then the highest possible rank of the product sequence is bounded by $r_1r_2$. For equality to be possible, we need that $(r_1 - k_1)(r_2 - k_2)=0$, which occurs only when $r_1 = k_1$ or $r_2 = k_2$, meaning in each case that all of the respective roots are different.
        \end{proof}

        \begin{ex}
            Consider the sequence $\seqq{s}$ with terms $1, 1, 2, 1, 1, 1, 0\dots$ and the sequence $\seqq{t}$ with terms $1, 1, 1, 3, 17, 83, 345, \dots$; satisfying, respectively, the following minimal recurrences for all $n\geq 0$:
            \begin{align*}
                s(n+4) &= 2s(n+2) - s(n)\\
                t(n+3) &= 7t(n+2) - 16t(n+1) + 12t(n).
            \end{align*}
            The product sequence $\seqq{st}$ turns out to be $$1, 1, 2, 1, 3, 17, 0, 345, -1291, 4513, -30150, 48809, \dots,$$
            with its minimal recurrence needing only $10$ previous terms. Specifically, for $n\geq 0$,
            \begin{align*}
                \seq[n+10]{st} &= 30\seq[n+8]{st} -345\seq[n+6]{st} \\&+ 1900\seq[n+4]{st} - 5040\seq[n+2]{st} + 5184\seq{st}.
            \end{align*}
            The rank of $\seq{st}$ is bounded above by $12$, according to Theorem \ref{thm:product_rank_general}. However, the bound is not met because neither of the characteristic polynomials have all distinct roots. In fact, $\char{s}(x) = (x-1)^2(x+1)^2$ and $\char{t}(x) = (x-2)^2(x-3)$. The characteristic polynomial of the product sequence is $\char{st}(x) = (x - 2)^3(x-3)^2(x+2)^3(x+3)^2,$
            as expected by the multiplicity rules from Proposition \ref{prop:multiplicity_rules}.
        \end{ex}
            
    \section{Refining the bounds on the rank of powers of a sequence}\label{SEC_BOUNDS}
            In this section, we review the known bounds on the rank, and then use what we know about the multiplicity of different roots to obtain more specific bounds, which depend only on the additional parameter of the number of different roots. The proof uses a counting argument similar to the one from Theorem \ref{thm:product_rank_general}.
            
            In general, if $s$ has $k_1$ different roots, and $t$ has $k_2$ different roots, then the sequence $\seqq{st}$ has at most $k_1 k_2$ different roots, regardless of the multiplicities. This may not always be the case. In Example \ref{ex:fibo_luc}, we show that, when expanding the product of two exponential polynomials, there are two different phenomena which affect the final rank. First, if different pairs of roots have the same product, then the number of different summands will be lower than expected. For example, between the sets of roots $1, 3$  and $2, 6$ there are only $3$ different products, namely $2, 6$ and $18$, because $1\cdot 6 = 2\cdot 3$. Moreover, if the corresponding polynomial coefficients cancel each other out, then that product of roots ends up not being a root of the exponential polynomial, further reducing the rank of the resulting sequence.

            \begin{ex}\label{ex:fibo_luc}
                The Lucas sequence is defined with the same recurrence as the Fibonacci numbers, but with initial terms $L(0) = 2, L(1) = 1$. Their exponential polynomials are given by
                \begin{align*}
                    F(n) &= \frac{1}{\sqrt{5}}\left(\phi^n - \phic^n\right),\\
                    L(n) &= \phi^n + \phic^n,
                \end{align*}
                with $\phi = \frac{1 + \sqrt{5}}{2}, \phic = \frac{1 - \sqrt{5}}{2}$.
                Expanding the corresponding product, we obtain the following exponential polynomials for $\seq{F^2}, \seq{L^2},$ and $\seq{F}\seq{L}$.
                \begin{align*}
                    \seq{F^2} &= \frac{1}{5}(\phi^2)^n - \frac{2}{\sqrt{5}}(\phi\phic)^n + \frac{1}{5}(\phic^2)^n,\\
                    \seq{F}\seq{L}&= \frac{1}{\sqrt{5}}\left((\phi^2)^n - (\phic^2)^n\right),\\
                    \seq{L^2} &= (\phi^2)^n + 2(\phi\phic)^n + (\phic^2)^n.
                \end{align*}
                First, notice how the exponential polynomials for both $F(n)^2$ and $L(n)^2$ have three summands and not four, since $\phi\phic = \phic\phi$ and thus two of the terms in each expansion can be added together. Furthermore, the exponential polynomial for $F(n)L(n)$ has only two summands, because the coefficient of $\phi\phic$ turns out to be $0$. From here, by Theorem~\ref{thm:exps_seqs}, we have that $F(n)^2$ and $L(n)^2$ have rank $3$, while the product sequence $F(n)L(n)$ only has rank $2$, failing to meet the bound of $4$ given by Theorem \ref{thm:product_rank_general}.
            \end{ex}
            When talking about the powers of a sequence (i.e.\ the products of a sequence with itself), the set of roots of the characteristic polynomial is always the same. Therefore, there are many sets of roots that yield equal products (by taking products of the same roots in a different order). Thus, we can come up with stronger upper bounds on the rank of each power.

            A first bound on the rank of the powers of a sequence, given also in \cite{Shannon1971Bounds} and \cite{Stinchcombe1998Exponentials}, can be seen quickly when all the roots are different. We include a short proof and then we expand the logic to the general case.
            
            \begin{theorem}\label{thm:bounds_initial}
                Let $\seqq{s}$ be a sequence of rank $r$ for which the roots of the characteristic polynomial are all different. Then, for any positive integer $M$, the rank of $\seq{s^M}$ is at most
                \begin{align*}
                    \rank s^M \leq \binom{M+r-1}{M}.
                \end{align*}
            \end{theorem}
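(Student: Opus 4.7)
The plan is to apply the multinomial theorem to the exponential polynomial of $s(n)$ and then count the number of distinct monomials that appear. Because the roots $\rho_1, \dots, \rho_r$ of $\char{s}(x)$ are all distinct, Theorem~\ref{thm:exps_seqs} tells us that each polynomial coefficient $P_i(n)$ in the exponential polynomial of $s$ has degree $0$, so we may write
\begin{align*}
    s(n) = \sum_{i=1}^{r} c_i \rho_i^n
\end{align*}
for constants $c_i$ (and sufficiently large $n$).

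Next, I would raise both sides to the $M$-th power and expand via the multinomial theorem to obtain
\begin{align*}
    s(n)^M = \sum_{\substack{a_1, \dots, a_r \geq 0 \\ a_1 + \cdots + a_r = M}} \binom{M}{a_1, \dots, a_r} c_1^{a_1}\cdots c_r^{a_r}\, \bigl(\rho_1^{a_1}\cdots \rho_r^{a_r}\bigr)^n.
\end{align*}
This is an exponential polynomial whose candidate ``roots'' are the monomial products $\rho_1^{a_1}\cdots \rho_r^{a_r}$, one for each weak composition of $M$ into $r$ nonnegative parts, and whose coefficients are all constants (degree $0$).

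To finish, I would invoke Theorem~\ref{thm:exps_seqs} once more. The roots of $\char{s^M}(x)$ are exactly the distinct values appearing among the $\rho_1^{a_1}\cdots \rho_r^{a_r}$ that survive after collecting like terms, and because each surviving coefficient is a constant (degree $0$), each such root has multiplicity exactly $1$ in $\char{s^M}(x)$. Hence $\rank s^M$ equals the number of distinct surviving monomial products, which is bounded above by the total number of weak compositions of $M$ into $r$ parts, namely $\binom{M+r-1}{M}$.

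There is essentially no difficulty here beyond organizing the counting: the hypothesis that all roots are distinct is exactly what collapses each $P_i(n)$ to a constant, which in turn guarantees every product of coefficients remains a constant and every distinct product of roots contributes multiplicity $1$. Potential coincidences among the monomials $\rho_1^{a_1}\cdots \rho_r^{a_r}$, or vanishing of their combined coefficients, can only decrease the rank and therefore only strengthen the bound. This is also why the argument does not generalize immediately to the case of repeated roots, which is handled separately in Theorem~\ref{thm:bounds_specific}.
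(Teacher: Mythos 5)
Your proof is correct and takes essentially the same approach as the paper: both count multisets of $M$ roots (equivalently, weak compositions of $M$ into $r$ nonnegative parts), conclude each surviving product has multiplicity $1$, and note that coincidences among products or vanishing coefficients only lower the rank. The one cosmetic difference is that you justify the multiplicity-$1$ claim directly from the degree-$0$ coefficients via Theorem~\ref{thm:exps_seqs}, whereas the paper cites Proposition~\ref{prop:multiplicity_rules_general}.
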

            \begin{proof}
                The roots of $\char{s^M}(x)$ will be all the different possible products of $M$ roots of $\char{s}(x)$ (possibly with repetition). Every one of these products can be associated with a different multiset of $M$ roots, up to the order of the factors. There are $\binom{M+r-1}{M}$ such multisets. Thus, there are at most $\binom{M+r-1}{M}$ different roots of $\char{s^M}(x)$, with the bound not being met in the case of different multisets having the same product or with some of the polynomial coefficients being $0$. By Proposition~\ref{prop:multiplicity_rules_general}, each of these products has a multiplicity of $1$, from where the rank of the power sequence is at most equal to the binomial coefficient.
            \end{proof}

            In Theorem~\ref{thm:product_rank_general}, for two general sequences, the maximum possible rank of the product sequence is maximized when the roots of one of the characteristic polynomials are all different. A similar statement applies to the particular case of the powers of a sequence. The next theorem refines the known bounds, and it follows as a corollary that the bounds from Theorem~\ref{thm:bounds_initial} apply even when not all roots are distinct.

            \begin{theorem}\label{thm:bounds_specific}
                Let $\seqq{s}$ be a sequence of rank $r$ whose characteristic polynomial has $k$ different roots. Then, for any positive integer $M$, the rank of $\seq{s^M}$ satisfies
                \begin{align*}
                    \rank s^M
                        &\leq (r-k)\binom{M+k-1}{M-1} + \binom{M+k-1}{M}.
                \end{align*}
            \end{theorem}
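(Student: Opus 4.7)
The plan is to expand $s^M$ as a sum of exponential polynomial terms indexed by multisets of roots, bound the degree of each term's polynomial coefficient, and then add up these bounds using two basic compositional identities.

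Write the exponential polynomial for $s$ as $s(n) = \sum_{j=1}^{k} P_j(n) \rho_j^n$, where $\deg P_j = m_j - 1$ with $m_j = m_s(\rho_j)$ and $\sum_j m_j = r$. Applying the multinomial theorem gives
\begin{align*}
    s(n)^M = \sum_{\substack{a_1,\dots,a_k \geq 0 \\ a_1 + \dots + a_k = M}} \binom{M}{a_1,\dots,a_k} \left(\prod_{j=1}^{k} P_j(n)^{a_j}\right) \left(\prod_{j=1}^{k} \rho_j^{a_j}\right)^n.
\end{align*}
Each composition $(a_1,\dots,a_k)$ of $M$ therefore contributes a term whose base is the product $\prod_j \rho_j^{a_j}$ and whose polynomial coefficient has degree at most $\sum_j a_j (m_j - 1) = \sum_j a_j m_j - M$. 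By Theorem~\ref{thm:exps_seqs}, this composition contributes at most $1 - M + \sum_j a_j m_j$ to the rank of $s^M$; this is sharp exactly when the product $\prod_j \rho_j^{a_j}$ does not collide with the product associated to any other composition, in the spirit of Proposition~\ref{prop:multiplicity_rules_general}. When collisions do occur, their coefficient polynomials get added, which can only shrink the total contribution. Summing the per-composition bound over all compositions therefore gives an upper bound on $\rank s^M$.

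Next I would evaluate the resulting sum. There are $\binom{M+k-1}{M}$ compositions of $M$ into $k$ nonnegative parts, and by a standard generating-function computation (or by symmetry, since the total $\sum_j a_j = M$ is constant across compositions), one has
\begin{align*}
    \sum_{a_1 + \dots + a_k = M} a_j = \binom{M+k-1}{M-1}
\end{align*}
for each fixed $j$. Hence
\begin{align*}
    \sum_{a_1+\dots+a_k = M}\!\left(1 - M + \sum_j a_j m_j\right) = (1-M)\binom{M+k-1}{M} + r\binom{M+k-1}{M-1}.
\end{align*}
The identity $M \binom{M+k-1}{M} = k \binom{M+k-1}{M-1}$ rewrites this as $(r-k)\binom{M+k-1}{M-1} + \binom{M+k-1}{M}$, which is exactly the claimed bound.

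The main point that requires care is step (3): arguing that the collision-free upper bound on each composition's contribution really does aggregate into an upper bound on the whole rank. The key observation is that combining like terms (same product of roots) replaces a sum of polynomials of degrees $d_1, \dots, d_\ell$ by a polynomial of degree at most $\max d_i$, so the contribution to the rank is at most $\max d_i + 1 \leq \sum(d_i + 1)$. This lets us sum composition by composition rather than root by root, and the rest is the combinatorial identity above.
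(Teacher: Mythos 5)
Your proposal is correct, and its conceptual core coincides with the paper's: both proofs decompose the roots of $\char{s^M}(x)$ according to multisets of $M$ roots of $\char{s}(x)$ (your weak compositions $(a_1,\dots,a_k)$), bound the multiplicity contributed by each multiset by $1 - M + \sum_j a_j m_j$ via Proposition~\ref{prop:multiplicity_rules_general}, and sum over all multisets. Where you genuinely diverge is in the two places where the paper's write-up is least satisfying. First, the evaluation of the sum: the paper fixes a root, counts the multisets containing it exactly $j$ times, arrives at $\sum_{j=1}^{M} j\binom{M+k-j-2}{M-j}$, and then needs a change of variables plus a binomial identity whose verification it dismisses as a ``routine'' induction; you instead compute $\sum_{a_1+\dots+a_k=M} a_j = \tfrac{M}{k}\binom{M+k-1}{M} = \binom{M+k-1}{M-1}$ in one line by symmetry (double counting), and finish with the absorption identity $M\binom{M+k-1}{M} = k\binom{M+k-1}{M-1}$ — this eliminates the induction entirely and is a real simplification of the bookkeeping. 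Second, the collision step: the paper merely asserts that for an upper bound one ``may assume'' distinct multisets yield distinct products and that no coefficient vanishes, whereas your observation that merging colliding terms replaces coefficient polynomials of degrees $d_1,\dots,d_\ell$ by a single polynomial of degree at most $\max_i d_i$, so the contribution to the rank is at most $\max_i d_i + 1 \leq \sum_i (d_i + 1)$, turns that assertion into an actual argument. Both of your binomial identities check out, so your plan yields a complete proof of the stated bound, and arguably a cleaner one than the original.
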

            \begin{proof}
                As in the proof of Theorem~\ref{thm:bounds_initial}, and because we only care about the upper bound, we may assume that different multisets of the roots correspond to different products, and that no polynomial coefficient vanishes.

                We use a combinatorial argument. From Proposition~\ref{prop:multiplicity_rules_general}, the multiplicity of each root of $\char{s^M}(x)$ will be the sum of $M$ different multiplicities plus $1 - M$. The rank of $\seq{s^M}$ is the sum of the multiplicities of its different roots. Thus, we count instead the number of times that each root (and thus, the corresponding multiplicity) appears in all the multisets.

                Let $r_1, \dots, r_k$ be the different roots of $\char{s}(x)$, with respective multiplicities $m(r_1)$ up to $m(r_k)$. The root $r_1$ may appear a different number of times depending on the multiset, but always a number between $0$ an $M$. In fact, we can identify the cases in which each particular value occurs, as shown below.
                \begin{itemize}
                    \item $1$ time for each multiset of $\set{r_2, \dots, r_k}$ of size $M-1$.
                    \item $2$ times for each multiset of $\set{r_2, \dots, r_k}$ of size $M-2$.
                    
                    $\vdots$
                    \item $M$ times for each multiset of $\set{r_2, \dots, r_k}$ of size $0$.
                \end{itemize}
                From here, the number of times that each root appears across all multisets is the sum
                \begin{align*}
                    \sum_{j = 1}^{M} j\binom{(k-1) + (M - j - 1)}{(k - 1) - 1} &= \sum_{j = 1}^{M} j\binom{M + k - j - 2}{k-2}\\
                    &= \sum_{j = 1}^{M} j\binom{M + k - j - 2}{M-j}.
                \end{align*}

                Thus, the sum of the multiplicities of all possible products cannot be larger than
                \begin{align*}
                    \sum_{i = 1}^{M} \sum_{j = 1}^{M} {}&j\binom{M + k - j - 2}{M-j} \cdot m(r_i) + (1 - M)\binom{M+k-1}{M}\\
                    &= \sum_{i = 1}^{M} m(r_i) \sum_{j = 1}^{M} j\binom{M + k - j - 2}{M-j} + (1 - M)\binom{M+k-1}{M}\\
                    &= r \sum_{j = 1}^{M} j\binom{M + k - j - 2}{M-j} + (1 - M)\binom{M+k-1}{M}.
                \end{align*}
    
                For simplicity, we use the change of variables $j' = M-j, k' = k-2$. We also expand the term on the right to obtain the following simplified upper bound.
                \begin{align*}
                    r\sum_{j' = 0}^{M-1} (M-j')\binom{k' + j'}{j'} - M\binom{M + k' + 1}{M} + \binom{M + k' + 1}{M}
                \end{align*}
    
                It suffices to show that, for all $k$ and $r$, the identity $$r\sum_{j' = 0}^{M-1} (M-j')\binom{k' + j'}{j'} - M\binom{M + k' + 1}{M} = (r-k'-2)\binom{M+ k' + 1}{M-1}$$
                holds. The proof is routine and can be realized through induction, by using the well-known Pascal's identity $\binom{n}{k} + \binom{n}{k+1} = \binom{n+1}{k+1}$ and the identity $\sum_{j' = 0}^{M}\binom{k' + j'}{j'} = \binom{k' + M + 1}{M}$.
            \end{proof}

        \begin{ex}
            Consider the sequence $\seqq{s}$ with terms $1, 1, 2, 1, -8, -37, -110, \dots;$ which satisfies for all $n\geq 0$ the recurrence
            \begin{align*}
                s(n+4)=5s(n+3)-9s(n+2)+7s(n+1)-2s(n).
            \end{align*}
            The rank sequence for $\seq{s}$ is the sequence $4, 9, 16, 25, 36, \dots$, which falls short of the bounds from Theorem \ref{thm:bounds_initial} when $r = 4$. This is because $\char{s}(x) = (x-1)^3(x-2)$ has degree $4$, but only $k = 2$ different roots. However, this behavior follows the bounds from Theorem~\ref{thm:bounds_specific}. The lists of coefficients for the minimal recurrences of its first powers are given in Table~\ref{tab:recurrence_growth}.
            \begin{table}[h]
                \centering
                \begin{tabular}{|c|c|}
                     \hline
                     $M$ & Coefficients \\
                     \hline\hline
                     $1$ & $(5, -9, 7, -2)$\\
                     $2$ & $(15, -96, 346, -777, 1131, -1070, 636, -216, 32)$\\
                     $3$ & \begin{tabular}{c}
                           $(37, -615, 6121, -40957, 195855, -693853, 1861075, -3825918, 6057376,$\\
                            $-7371808, 6832560, -4733920, 2372992, -812544, 169984, -16384)$
                        \end{tabular}\\
                        \hline
                \end{tabular}
                \caption{Minimal recurrences for powers of $\seq{s}$.}
                \label{tab:recurrence_growth}
            \end{table}
        \end{ex}

            Note how, if all the roots of the characteristic polynomial are different (in other words, if $r = k$), then our bound simplifies to that given by Shannon in \cite{Shannon1974Exponentials}. In the rest of the cases, the bounds are less than or equal to those given by Theorem~\ref{thm:bounds_initial}.

    \section{Searching for all possible rank sequences}\label{SEC_COMPUTING}
        The main aim of this paper was to study the number of different rank sequences that could arise for constant-recursive sequences. It was shown that there were two mechanisms through which a sequence could fail to meet the refined rank bounds, yielding different rank sequences. In this section, we characterize most of the rank sequences for sequences of rank $2$, and we list the results of computational searches for sequences of higher ranks.
        
        Recall that in order not to meet the bounds from Theorem \ref{thm:bounds_specific}, it was necessary to have distinct multisets of roots with a same product. In addition, whenever the polynomial coefficients cancelled out, we arrived at further reductions in the resulting rank. Now we note that, while many sequences share the roots of their characteristic polynomials, the polynomial coefficients are more uniquely tied to each sequence, by Theorem~\ref{thm:seqs_exps}. Indeed, two sequences share an exponential polynomial if and only if they are eventually equal to each other.
        
        For this reason, the cases in which the polynomial coefficients cancel out occur sporadically among the set of all rank sequences. Thus, we refer to these as {\itshape particular} rank sequences, whereas all others which can be understood uniquely from the roots and their product relationships are denoted by {\itshape general} rank sequences.
    
        The bounds given in the previous section give an initial set of possible rank sequences, so long as it is possible to find a sequence for which the rank bounds are always sharp. That is, as long as there are sequences whose powers always attain the maximum possible rank. Proving their existence is straightforward, even when restricting ourselves to recurrences with only rational (or integer) coefficients. To find one, simply use the recurrence whose characteristic polynomial has prime numbers as its roots, with appropriate multiplicities to have a given rank $r$ and number of different roots $k$. By the fundamental theorem of arithmetic, different multisets of roots correspond to different roots of the power sequences, and thus the bounds will be attained.

        However, a direct computational search quickly reveals that these examples are far from encompassing all possibilities. In Tables~\ref{tab:r2_r3} and the table in the appendix, we list the different general rank sequences found for sequences with initial ranks $2, 3, 4$ and $5$, where the boldfaced sequences are those which arise from the bounds given in Theorem~\ref{thm:bounds_specific}. Finding particular rank sequences in this way is a more difficult problem, because we need to find, for each possible recurrence, a set of initial values that make the polynomial coefficients of the exponential polynomial cancel out at some point. A couple of examples are given below.

        \begin{ex}\label{ex:poly_cancel}
            Consider the sequence $\seqq{s}$ with terms $1, 1, -1, -1, 1, 1, \dots$; whose minimal recurrence is $s(n+2) = -s(n)$. The characteristic polynomial is $x^2 + 1$, with roots $\rho_1 = i, \rho_2 = -i$. Since $\rho_1^2 = \rho_2^2$, the general rank sequence for sequences with the same recurrence is $2, 2, 2, \dots$, because, by Theorem~\ref{thm:exps_seqs}, when expanding the exponential polynomials, the only distinct roots of $\char {s^M}(x)$ are $\rho_1^{M}$ and $\rho_1^{M-1}\rho_2$, each with multiplicity $1$. However, we can quickly observe that the sequence $\seqq{s^2}$ is $1, 1, 1, 1, 1,\dots$, and it has rank $1$, because its minimal recurrence is $s(n+1) = s(n)$. In fact, the rank sequence of $\seq{s}$ is the particular rank sequence $2, 1, 2, 1, 2,\dots$. The exponential polynomial of $\seq{s}$ is $\seq{s} = \left(\frac{1 - i}{2}\right)(i)^n +  \left(\frac{1 + i}{2}\right)(-i)^n$. Squaring this polynomial yields the exponential polynomial $1 \cdot (\rho_1\rho_2)^n = 1 \cdot (1)^n$ of only one summand, because the coefficient for $\rho_1^2 = \rho_2^2$ is $\left(\frac{1 - i}{2}\right)^2 + \left(\frac{1 + i}{2}\right)^2 = 0$.
        \end{ex}
        \begin{ex}
            We can extend the previous example to find sequences of higher rank with (eventual) coefficient cancellation in the rank sequence. Consider the sequence $\seqq{s}$ given by $\seq{s} = \left(\frac{1 - i}{2}\right)(i)^n +  \left(\frac{1 + i}{2}\right)(-i)^n + 1\cdot (2)^n$, namely $2, 3, 3, 7, 17, \dots$. By Theorem~\ref{thm:exps_seqs}, this is constant-recursive. Its characteristic polynomial is
            \begin{align*}
                \char{s}(x) = (x^2 + 1)(x-2) = x^3 - 2x^2 + x - 2,   
            \end{align*}
            and therefore its minimal recurrence is $$s(n+3) = 2s(n+2) - s(n+1) + 2s(n).$$ The general rank sequence for sequences with the same recurrence is $3, 5, 7, 9, 11, \dots$. Yet, the rank sequence for $\seqq{s}$ is $3, 4, 6, 7, 9, \dots$, because again the polynomial coefficients of $i^n$ and $(-i)^n$ cancel each other out when $n$ is even.
        \end{ex}

         Direct search among sequences with integer coefficients suggests that there are $8$ different general rank sequences for sequences of rank $3$, and at least $41$ for sequences of rank $4$. Moreover, the following conjecture seems to hold.
         \begin{conj} \label{conj:weak_rank_seqs}
             Every general rank sequence is eventually polynomial. Every particular rank sequence is eventually pseudo-polynomial.
         \end{conj}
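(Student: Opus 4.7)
The plan is to translate the question into lattice-point counting via the exponential polynomial representation of $\seq{s^M}$. Writing $s(n) = \sum_{j=1}^{k} P_j(n)\rho_j^n$, the multinomial expansion
\begin{align*}
    s(n)^M = \sum_{(a_1,\dots,a_k)\in\Delta_M} \binom{M}{a_1,\dots,a_k} \prod_{j=1}^{k} P_j(n)^{a_j} \left(\prod_{j=1}^{k} \rho_j^{a_j}\right)^n,
\end{align*}
with $\Delta_M = \set{(a_1,\dots,a_k) \in \bset{Z}_{\geq 0}^k : a_1 + \cdots + a_k = M}$, can be regrouped by the value of $\rho = \prod_j \rho_j^{a_j}$ to yield the exponential polynomial $\sum_{\rho \in G_M} Q_\rho(n)\rho^n$, where $G_M$ is the image of $\Delta_M$ under $(a)\mapsto\prod_j\rho_j^{a_j}$ and $Q_\rho(n)$ is the collected coefficient polynomial. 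By Theorem~\ref{thm:exps_seqs}, $\rank s^M = \sum_{\rho:\,Q_\rho\neq 0}\bigl(1+\deg Q_\rho\bigr)$.

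For a \emph{general} rank sequence, no coefficient cancellations beyond those forced by the product structure occur, so $\rank s^M = |G_M| + \sum_{\rho \in G_M}\deg Q_\rho$ with $\deg Q_\rho = \max_{(a)\in[\rho]}\sum_j a_j(m(\rho_j)-1)$. The first piece $|G_M|$ is exactly the Hilbert function, in degree $M$, of the graded subalgebra $\bset{C}\langle \rho_1 t,\dots,\rho_k t\rangle \subseteq \bset{C}[t]$ graded by powers of $t$. Since this algebra is finitely generated in degree $1$, a classical theorem of Hilbert gives that $|G_M|$ is eventually polynomial in $M$. For the second piece I would partition $\Delta_M$ according to which tuple in each coset of the kernel lattice $L = \ker\bigl((a)\mapsto \prod \rho_j^{a_j}\bigr)$ realizes the maximum of the linear functional $\sum_j a_j(m(\rho_j)-1)$. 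Each block is a polyhedron intersected with a sublattice coset, whose lattice-point count is eventually quasi-polynomial via Ehrhart theory, and one must then show that these quasi-polynomials combine into a single polynomial.

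For a \emph{particular} rank sequence, one subtracts from the general count the contributions of those $\rho$ for which $Q_\rho$ vanishes or has below-generic degree. Such cancellations are algebraic identities among multinomial coefficients and products of the polynomials $P_j$; they depend on $M$ only through arithmetic conditions tied to the torsion of the multiplicative group generated by $\rho_1,\dots,\rho_k$ (together with the data of the initial values of $\seq{s}$). The set of $M$ realizing any given cancellation pattern is therefore a finite union of arithmetic progressions, contributing an eventually periodic correction. Adding this to the polynomial general part yields the claimed pseudo-polynomial behavior.

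The principal obstacle is proving polynomiality, and not merely quasi-polynomiality, of the multiplicity sum in the general case: when $\bset{Z}^k/L$ has torsion, the optimizing vertex of the linear functional over each coset intersection with $\Delta_M$ can vary with $M$ in a way that typically produces periodic corrections. A successful argument will likely involve representing the Hilbert series of the underlying semigroup as a rational function and carefully controlling its poles at roots of unity, or else a direct combinatorial cancellation argument showing that the periodic fluctuations across different cosets of $L$ offset one another precisely when every $Q_\rho$ remains nonzero with its top-degree coefficient intact.
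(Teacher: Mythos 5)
You should know at the outset that this statement is Conjecture~\ref{conj:weak_rank_seqs} of the paper: the authors do not prove it, and Section~\ref{SEC_SYMBOLS} offers only a heuristic lattice picture in its support, so the only question is whether your sketch is itself a complete proof. It is not; there is one outright error and two gaps, one of which you flag yourself. The error: the degree-$M$ graded piece of $\bset{C}\langle \rho_1 t,\dots,\rho_k t\rangle \subseteq \bset{C}[t]$ does \emph{not} have dimension $\abs{G_M}$. The generators $\rho_j t$ are pairwise proportional scalar multiples of $t$, so every degree-$M$ monomial $\prod_j (\rho_j t)^{a_j} = \bigl(\prod_j \rho_j^{a_j}\bigr) t^M$ lies in the one-dimensional span of $t^M$, and the Hilbert function of this algebra is eventually $1$. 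The repair is to keep the roots separate from the grading variable: let $L \subseteq \bset{Z}^k$ be the lattice of relations among the roots (the paper's $N$), let $S$ be the monoid generated by the images of $(1,e_1),\dots,(1,e_k)$ in $\bset{Z}_{\geq 0} \times (\bset{Z}^k/L)$, and take the semigroup algebra $\bset{C}[S]$; then $\dim_{\bset{C}} \bset{C}[S]_M = \abs{G_M}$, and since $\bset{C}[S]$ is a finitely generated graded algebra generated in degree one, Hilbert's theorem does give that $\abs{G_M}$ is eventually polynomial in $M$. With this repair your first step becomes a correct proof of the first sentence of the conjecture \emph{in the special case where all roots of $\char{s}(x)$ are simple}, since then $\rank s^M = \abs{G_M}$ for a general rank sequence. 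That partial result is genuinely worth writing down.

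Beyond that, the sketch does not close the conjecture. When roots have multiplicity you need $\sum_{\rho \in G_M} \max_{a \in [\rho]\cap\Delta_M} \sum_j a_j\bigl(m(\rho_j)-1\bigr)$ to be eventually polynomial, where $[\rho]$ is the fiber of tuples with product $\rho$; Ehrhart-type counting over the $L$-cosets yields only quasi-polynomials, and, as you concede, showing that the periodic parts cancel is exactly the open issue --- compare Example~\ref{ex:rank_5_lattice}, where $\bset{Z}^k/N$ has a $\bset{Z}_3$ torsion factor and yet the observed rank sequence is eventually an honest polynomial, for reasons nobody can yet force in general. Worse, your treatment of particular rank sequences is circular: the claim that the set of $M$ realizing a given cancellation pattern is a finite union of arithmetic progressions \emph{is} the second sentence of the conjecture in disguise, and you give no argument for it. Whether $Q_\rho(n) = \sum_{a \in [\rho]\cap\Delta_M} \binom{M}{a_1,\dots,a_k}\prod_j P_j(n)^{a_j}$ vanishes identically in $n$ (or drops below its generic degree) is a Diophantine condition in $M$; asserting it holds exactly along arithmetic progressions requires a genuine argument --- some Skolem--Mahler--Lech-type input, perhaps --- not an appeal to ``torsion.'' So, after repair, your approach proves the multiplicity-one half of the first sentence and nothing more; both assertions of the conjecture remain open.
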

        \begin{table}[h!]
            \centering
            \begin{tabular}{|c|l|c|}
                \hline
                 Coefficients &Rank Sequence & Eventual polynomial  \\
                \hline\hline
                 $0, 1$ &$2, 2, 2, 2, 2, 2, 2, 2,\dots$ & $2$ \\ 
                 $1, -1$ &$2, 3, 3, 3, 3, 3, 3, 3, \dots$ & $3$ \\ 
                 $2, -2$ &$2, 3, 4, 4, 4, 4, 4, 4, \dots$ & $4$ \\ 
                 $3, -3$ &$2, 3, 4, 5, 6, 6, 6, 6, \dots$ & $6$ \\ 
                 $1, 1$ &$\boldsymbol{2, 3, 4, 5, 6, 7, 8, 9, \dots}$ & $M+1$  \\ 
                \hline
                 $0, 0, 1$ &$3, 3, 3, 3, 3, 3, 3, 3, \dots$ & $3$  \\ 
                 $1, -1, 1$ &$3, 4, 4, 4, 4, 4, 4, 4, 4, \dots$ & $4$ \\ 
                 $2, -2, 1$ &$3, 5, 6, 6, 6, 6, 6, 6, 6, \dots$ & $6$ \\ 
                 $1, 1, -1$ &$\boldsymbol{3, 5, 7, 9, 11, 13, 15, 17, \dots}$ & $2M + 1$  \\ 
                 $1, 1, 2$ &$3, 6, 9, 12, 15, 18, 21, 24, \dots$ & $3M$  \\ 
                 $2, 0, -1$ &$3, 6, 10, 14, 18, 22, 26, 30, \dots$ & $4M - 2$ \\ 
                 $2, 0, -3$ &$3, 6, 10, 15, 21, 27, 33, 39, \dots$ & $6M - 9$ \\ 
                 $4, 11, -30$&$\boldsymbol{3, 6, 10, 15, 21, 28, 36, 45, \dots}$ & $\frac{M^2}{2} + \frac{3M}{2} + 1$ \\
                \hline
            \end{tabular}
            \caption{Rank sequences generated from sequences of starting ranks $2$ and $3$.}
            \label{tab:r2_r3}
        \end{table}
        
        Computing all the different rank sequences quickly becomes a computationally expensive task. When performing these searches, the sequences of ranks $4$ and $5$ were significantly constrained in the recurrences they could satisfy, and only for rank $2$ have we been able to prove that they are the only general possibilities, by generalizing the argument from Example~\ref{ex:poly_cancel}, as shown in the following theorem.

        \begin{theorem}\label{thm:rank_seqs_for_rank2}
            Let $\seqq{s}$ be a sequence of rank $2$ satisfying a recurrence with rational coefficients, for which the rank sequence is a general sequence (that is, without coefficient cancellation in the powers of the exponential polynomial). For any positive $M$, the rank of $\seqq{s^M}$ is either always equal to $M+1$, or it eventually becomes constant, equal to one of $\set{2, 3, 4, 6}.$ In total, there are $5$ different general rank sequences.
        \end{theorem}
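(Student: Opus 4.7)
The plan is to reduce the question to one about the ratio $\rho_1/\rho_2$ of the two characteristic roots, and then to use the fact that this ratio must lie in a quadratic extension of $\bset{Q}$ to constrain which roots of unity can occur. Let $s$ have rank $2$ with rational recurrence coefficients $c_1, c_0$, so $\char{s}(x) = x^2 - c_1 x - c_0$ has roots $\rho_1, \rho_2$. First I would dispose of the repeated-root case: if $\rho_1 = \rho_2 = \rho$, then by Theorem~\ref{thm:seqs_exps} we have $s(n) = (a + bn)\rho^n$ with $b \neq 0$ (otherwise $\rank s = 1$), so $s(n)^M = (a+bn)^M (\rho^M)^n$ has a single characteristic root $\rho^M$ with a degree-$M$ polynomial coefficient, giving $\rank s^M = M+1$.

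Assume instead that $\rho_1 \neq \rho_2$, so $s(n) = A\rho_1^n + B\rho_2^n$ with $A, B \neq 0$. Expanding binomially,
\begin{align*}
    s(n)^M = \sum_{i=0}^{M}\binom{M}{i}A^i B^{M-i} \bigl(\rho_1^i \rho_2^{M-i}\bigr)^n.
\end{align*}
All coefficients are scalars, so after collecting terms the \emph{general rank sequence} assumption guarantees the surviving coefficients are nonzero. Hence by Theorem~\ref{thm:exps_seqs}, $\rank s^M$ equals the number of distinct values of $\rho_1^i \rho_2^{M-i}$ for $0 \leq i \leq M$. Two such values coincide iff $(\rho_1/\rho_2)^{i-i'} = 1$, so the count is $M+1$ for every $M$ if $\rho_1/\rho_2$ is not a root of unity, and otherwise equals $\min(M+1,d)$---stabilizing at $d$---when $\rho_1/\rho_2$ is a primitive $d$-th root of unity.

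The main step, and the place where the rationality hypothesis is essential, is identifying which orders $d$ are actually possible. Since $\rho_1, \rho_2$ are roots of a quadratic in $\bset{Q}[x]$, the splitting field $K = \bset{Q}(\rho_1, \rho_2)$ satisfies $[K:\bset{Q}] \leq 2$. If $\rho_1/\rho_2 = \zeta_d$, then $\bset{Q}(\zeta_d) \subseteq K$, forcing $\phi(d) = [\bset{Q}(\zeta_d):\bset{Q}] \leq 2$; a routine check shows that the positive integers with $\phi(d) \leq 2$ are exactly $\set{1, 2, 3, 4, 6}$, and $d = 1$ is ruled out by the distinct-roots assumption. This leaves at most four eventual constants $\set{2, 3, 4, 6}$, in addition to the sequence $M+1$ arising from the repeated-root case and the non-root-of-unity case.

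Finally, I would exhibit a sequence attaining each of the five rank sequences---the recurrences with characteristic polynomials $x^2 - 1$, $x^2 - x + 1$, $x^2 - 2x + 2$, and $x^2 - 3x + 3$ realize $d = 2, 3, 4, 6$ respectively, and any Fibonacci-like recurrence realizes $M+1$---with generic rational initial conditions to ensure no coefficient cancellation occurs. I do not expect any serious obstacle beyond the cyclotomic bound $\phi(d) \leq 2$; after that classification, the remaining steps are purely combinatorial bookkeeping.
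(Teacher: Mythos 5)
Your proposal is correct, and it reaches the classification by a genuinely different route than the paper. Both arguments agree on the easy parts: the repeated-root case gives $\rank s^M = M+1$ via the single root $\rho^M$ with a degree-$M$ polynomial coefficient, and in the distinct-root case the rank is the number of distinct products $\rho_1^i\rho_2^{M-i}$, which is $M+1$ unless $\rho_1/\rho_2$ is a root of unity, in which case it stabilizes at the order $d$. The difference is in how the admissible orders $d$ are pinned down. The paper splits into real roots (where $|\alpha|=|\beta|$ forces $\alpha=-\beta$, hence $d=2$) and complex-conjugate roots, uses Vieta's formulas to show that $\cos^2(\arg\alpha)$ is rational while $\arg\alpha$ must be a rational multiple of $\pi$, and then invokes the generalized Niven theorem to conclude the argument is a multiple of $\pi/4$ or $\pi/6$. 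You instead observe that $\zeta_d = \rho_1/\rho_2$ lies in the splitting field $K=\bset{Q}(\rho_1,\rho_2)$ with $[K:\bset{Q}]\leq 2$, so $\varphi(d)=[\bset{Q}(\zeta_d):\bset{Q}]\leq 2$, forcing $d\in\set{1,2,3,4,6}$; this is cleaner, avoids the citation to Niven's theorem entirely, and does not need the real/complex case split. It also buys something the paper explicitly laments lacking: the paper's closing section notes that Niven's theorem has no useful generalization to three or more roots, whereas your field-degree argument does generalize in principle (for rank $r$, the splitting field has degree at most $r!$, so any root of unity arising as a multiplicative relation among roots has totient at most $r!$), even though the full relation structure in higher rank involves more than torsion. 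What the paper's approach buys in exchange is explicitness: it directly identifies the arguments $\pi/6,\pi/4,\pi/3,\pi/2$ realizing each constant, which doubles as the existence half of the count; your existence step (the polynomials $x^2-1$, $x^2-x+1$, $x^2-2x+2$, $x^2-3x+3$, plus Fibonacci) is equally valid, and your appeal to generic initial conditions to avoid coefficient cancellation is at the same level of rigor as the paper's own treatment of that point.
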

        \begin{proof}
            For any sequence of rank $2$, the characteristic polynomial will have either a single root $\alpha$ with multiplicity $2$, or two different roots $\alpha, \beta$. The first case is straightforward. The characteristic polynomial of $\seqq{s^M}$ will have only one root $\alpha^M$ with multiplicity $2M + (1-M) = M+1$, thus the rank of $\seq{s^M}$ is always $M+1$.

            For the second case, the roots of the characteristic polynomial of the different powers $\seq{s^M}$ will be of the form $\alpha^k\beta^{M-k}$, each with multiplicity $1$. If at some point two of these are equal, that is, $\alpha^k\beta^{M-k} = \alpha^j\beta^{M-j}$ for some $M, k,$ and $j$ (with $k\geq j$), then we must also have that $\alpha^{k-j} = \beta^{k-j}$.

            If there is an equality like this, by the well-ordering principle there exists a least $K$ that satisfies the equality. From here, the rank of $\seqq{s^M}$ will be $M+1$ for $M < K$, but it will be exactly $K$ for $m\geq K$, as in this case, the different roots (all with multiplicity $1$) are $\beta^M, \alpha\beta^{M-1}, \dots, \alpha^{K-1}\beta^{M + 1 - K}$. Any multiset of $M$ roots has a product necessarily equal to one of these.

            The question, then, is to find the possible equalities between corresponding powers of the roots $\alpha, \beta$. The roots are complex numbers, so infinitely many equalities occur. In fact, for every $K$, the pair of roots of unity $\alpha = e^{i\pi/K}, \beta = e^{-i\pi/K}$, satisfies $\alpha^K = \beta^K$. However, only a few of these come from a characteristic polynomial with rational coefficients. In fact, the only possibilities for $K$ are $2, 3, 4$ and $6$, as we now show.

            Since the characteristic polynomial has only real coefficients, its two roots must be either real or complex conjugates. In any case, we must have $\abs{\alpha} = \abs{\beta}$. In the real case, the only way in which both roots are distinct is if $\alpha = -\beta$, for which the equality is necessarily $\alpha^2 = \beta^2$, and thus the rank stabilizes at $2$.

            In the case with complex roots, we have from Vieta's formulas that $\alpha\beta = \abs{\alpha}^2$ and $\alpha + \beta = 2\mathrm{Re}(\alpha)$ must be rational numbers. This occurs only if the square cosine of the argument of $\alpha$ is rational. On the other hand, for $\alpha^K$ to equal $\beta^K$, we must have the argument of $\alpha$ equal to a rational multiple of $\pi$.

            By the generalized Niven's theorem (see \cite{Nunn2021Nivens}), this only occurs when the argument is an integer multiple of $\frac{\pi}{4}$ or $\frac{\pi}{6}$. Each of these multiples will yield one of the rank sequences $2, 3, 4, 5, 6, 6, \dots; 2, 3, 4, 4, \dots; 2, 3, 3, \dots$, and $2, 2, 2, \dots$. In fact, in the first quadrant, these sequences arise, respectively, when $\alpha$ is $\frac{\pi}{6}, \frac{\pi}{4}, \frac{\pi}{3},$ and $\frac{\pi}{2}$, and $\beta = \overline{\alpha}$.
        \end{proof}
        This proof suggests that dealing with higher initial ranks may not be a straightforward process. There is no generalization of Niven's theorem to deal with the different possible identities that can arise when having more than two different roots. In fact, we don't yet know which product equalities can arise with three complex roots. 

    \section{A way forward: Rank sequences using symbolic roots}\label{SEC_SYMBOLS}
        In this section, we discuss an alternative way to visualize the different root products, and describe how a symbolic approach may be useful to understand the set of rank sequences. We have seen that, for most sequences $\seqq{s}$, the main parameter determining their rank sequence is only the minimal recurrence they satisfy, because the behavior of the exponential polynomial is given mainly by the interactions between the roots of the characteristic polynomial, with only a few examples causing further coefficient cancellation that reduces the expected rank.

        On top of that, not even the individual values of these roots have a direct influence on the rank sequence. Rather, it is the possible product equalities between multisets of them which constrain the growth of the rank. For this reason, using symbolic roots may offer a simpler way to characterize the behavior of all rank sequences.

        We do this as follows. For a given set of roots $R = \set{\rho_1, \dots, \rho_k}$ of some sequence $\seqq{s}$, take the natural imbedding of the different products between them into the free abelian group $\bset{Z}^k$ generated by $R$. For example, with 4 roots, the product $\rho_1^2\rho_3$ corresponds to the point $(2, 0, 1, 0)$ in $\bset{Z}^4$. Elements with negative coordinates arise from expressions in which some of the roots are in the denominator of a fraction.

        Then, the roots of $\seqq{s^M}$ are counted by those points of $\bset{Z}^k$ with nonnegative coordinates for which the sum of the coordinates is $M$. This approach has the disadvantage that it does not account for the multiplicity of the roots, but it shows another way to arrive to the bounds given in Theorem~\ref{thm:bounds_initial}.

        Any equality of products between roots can then be seen as a relation between the generators of the free group. For example, with three roots, an equality $\rho_1^2 = \rho_2\rho_3$ occurs if and only if $\frac{\rho_1^2}{\rho_2\rho_3}=1$, which can be understood as a vector $(2, -1, -1)$ whose coordinates add up to $0$. Note that the division of any two relations yields another relation (likewise, substracting two vectors whose coordinates add up to zero results in a vector satisfying the same constraint). Therefore, equalities between products are in bijection with some subgroup $N$ of $\bset{Z}^k$, and all the different products are encoded in the quotient group $\bset{Z}^k/N$. 
        
        The question becomes that of counting the number of equivalence classes represented by points whose coordinates have a fixed sum. First, given a set of generators for $N$ in row-vector form, the structure of the quotient group can be found by computing the Smith normal form of the matrix obtained from the row-vectors (see \cite[p.\ 343--345]{Hungerford2003Algebra}).

        \begin{ex}\label{ex:rank_5_lattice}
            Consider the sequence $\seqq{t}$ with initial terms $1, 2, -1, 1, 1, 2,\dots$ and satisfying the minimal recurrence
            \begin{align*}
                \seq[n+5]{t} = 2t(n+4) + t(n+3) - 2t(n+2) - t(n+1) - t(n).
            \end{align*}
            The rank sequence for $\seq{t}$ is $5, 15, 35, 67, 111, 167, 235, 315, \dots$ as listed in the table in the appendix, which is eventually equal to the polynomial $6M^2 - 10M + 11$. For sequences with $5$ different roots (without multiplicity), the sequence of bounds on the rank is described by the quartic polynomial $\binom{M+4}{M}$, but in this case the polynomial is only quadratic.
            
            The characteristic polynomial $\char{t}(x)$ has $5$ different roots, namely
            \begin{align*}
                \rho_1 &= -1 & \rho_2&\approx-0.192 - 0.548i & \rho_3&\approx -0.192 + 0.548i\\
                \rho_4 &\approx 1.692 - 0.318i & \rho_5&\approx1.692 - 0.318i, & &
            \end{align*}
            which satisfy the equalities $\rho_1^4 = \rho_2\rho_3\rho_4\rho_5$ and $\rho_1^2\rho_2\rho_4 = \rho_3^2\rho_5^2$, corresponding to the subgroup $N$ of $\bset{Z}^5$ generated by $(4, -1, -1, -1, -1)$ and $(2, 1, -2, 1, -2)$. The Smith normal form of the matrix $$\begin{pmatrix}
                4& -1& -1& -1& -1\\
                2& 1& -2& 1& -2
            \end{pmatrix}$$
            is given by
            $$\begin{pmatrix}
                1& 0& 0& 0& 0\\
                0& 3& 0& 0& 0
            \end{pmatrix},$$
            so that the quotient group $\bset{Z}^5/N$ is isomorphic to $\bset{Z}_1 \oplus \bset{Z}_3 \oplus \bset{Z}^3 = \bset{Z}_3 \oplus \bset{Z}^3$.
            Note how the degree of the eventual polynomial $6M^2 - 10M + 11$ is one less than the rank of the free part $\bset{Z}^3$.
        \end{ex}
        
        Heuristically, since the rank of the free part of the quotient group $\bset{Z}/N$ is reduced by $1$ with each generator for $N$, it seems like so is the degree of the conjectured polynomial that describes the rank sequence.
        This yields a partial refinement of Conjecture \ref{conj:weak_rank_seqs}.

        \begin{conj}
            Given a recurrence whose characteristic polynomial has $k$ distinct roots (without multiplicity), and $N$ is the subgroup of relations between them, its general rank sequence is eventually given by a polynomial with degree $d-1$, where $d$ is the rank of the free part of the quotient group $\bset{Z}^k/N$.
        \end{conj}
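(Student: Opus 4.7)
The plan is to recast the general rank as an Ehrhart-type lattice-point count and then argue it is eventually polynomial of the predicted degree. First, under the symbolic framework of Section~\ref{SEC_SYMBOLS}, assuming the $k$ distinct roots $\rho_1, \dots, \rho_k$ are simple, Proposition~\ref{prop:multiplicity_rules_general} shows that $\rank s^M = |\pi(\Delta_M)|$, where $\pi \colon \bset{Z}^k \twoheadrightarrow \bset{Z}^k/N$ is the quotient map and $\Delta_M = \{a \in \bset{Z}_{\geq 0}^k : \sum a_i = M\}$ is the standard lattice simplex. Indeed, two multisets in $\Delta_M$ yield the same root of the characteristic polynomial of $s^M$ if and only if their difference lies in $N$, and each such root contributes multiplicity $1$ under the general-case assumption.

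Next, I would reformulate the count via the structure of the quotient. Write $\bset{Z}^k/N \cong \bset{Z}^d \oplus T$ with $T$ finite, and observe that since $N \subseteq H_0 := \{v \in \bset{Z}^k : \sum v_i = 0\}$, the coordinate-sum map $\sigma \colon \bset{Z}^k/N \twoheadrightarrow \bset{Z}$ is well-defined and surjective, with kernel $\ker \sigma \cong \bset{Z}^{d-1} \oplus T'$ of free rank $d - 1$. Fix the reference point $\tau_M = M e_1 \in \Delta_M$; then $\Delta_M - M e_1$ is the $M$-th dilate of the fixed rational polytope $Q := \Delta_1 - e_1 \subset H_0 \otimes \bset{R}$, and $\pi(\Delta_M)$ is an $M$-dependent shift of the image of $MQ \cap (H_0 \cap \bset{Z}^k)$ in $\ker \sigma$. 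Projecting further to the free part $\bset{R}^{d-1}$ of $\ker\sigma$, we obtain a fixed rational polytope $P$ of dimension $d - 1$. Ehrhart's theorem, summed over the $|T'|$ torsion cosets, then gives that $|\pi(\Delta_M)|$ is a quasi-polynomial in $M$ of degree $d - 1$, with positive leading coefficient $|T'| \cdot \mathrm{vol}_{d-1}(P)$.

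The hard step is upgrading ``quasi-polynomial'' to ``polynomial'' as the conjecture asserts. Ehrhart's theorem generically only produces a quasi-polynomial with period equal to the least common multiple of the vertex denominators of $P$, and true polynomial behavior requires additional input. A promising approach is to exploit translation by the all-ones vector $(1, 1, \dots, 1) \in \bset{Z}^k$, which embeds $\Delta_M$ into $\Delta_{M+k}$ and induces an injection $\pi(\Delta_M) + \pi(1, \dots, 1) \hookrightarrow \pi(\Delta_{M+k})$ by a fixed shift in the target group; the complement consists of images of multisets with some coordinate equal to zero, a boundary-type count of one lower Ehrhart degree. Iterating this peeling argument should yield a finite description that forces the residue-class components of the Ehrhart quasi-polynomial to coincide for all $M$ sufficiently large. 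Secondary obstacles include verifying that the projected polytope $P$ is genuinely of dimension $d - 1$ rather than a degenerate lower-dimensional image, and extending the result to characteristic roots of multiplicity greater than one, which fall outside the symbolic framework and would require weighting each orbit by the polynomial multiplicity formula from Proposition~\ref{prop:multiplicity_rules_general}.
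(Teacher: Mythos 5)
First, a point of context: the paper does not prove this statement. It is presented as a conjecture, supported only by the heuristic following Example~\ref{ex:rank_5_lattice} (the observation that each generator of $N$ appears to drop the degree by one) and by the data in the tables. So there is no proof of the paper's to compare against, and your attempt has to stand on its own. Your opening reduction is correct and is essentially a precise version of the paper's symbolic framework: for $k$ simple roots with no coefficient cancellation, $\rank s^M$ equals the number of $N$-cosets meeting the simplex $\Delta_M$, the relevant relations do lie in the sum-zero sublattice $H_0$, and translating by $Me_1$ puts the count inside the fixed lattice $L = H_0 \cap \bset{Z}^k$ with $L/N \cong \bset{Z}^{d-1} \oplus T$. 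The growth rate $\Theta(M^{d-1})$ does come out of this picture, which at least confirms the degree prediction at the level of orders of magnitude.

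However, there are two genuine gaps, one technical and one fatal to the claim of a proof. The technical gap: your Ehrhart step counts the wrong thing. What must be counted is the \emph{image} $\pi(L \cap MQ)$, i.e., the number of cosets of $N$ (equivalently, after splitting off torsion, of its saturation $N'$) that meet the dilate $MQ$. This is not a lattice-point count in the dilated rational polytope $P = p(Q) \subset \bset{R}^{d-1}$: a point $z \in \bset{Z}^{d-1} \cap MP$ need not lift to a point of $L \cap MQ$, because the fiber $MQ \cap p^{-1}(z)$, while nonempty, can be too thin near the boundary of $MQ$ to contain any point of the corresponding coset. So Ehrhart's theorem does not apply as you invoke it; even \emph{eventual quasi-polynomiality} of such projection/coset counts is a nontrivial theorem (it follows from results on Presburger-definable counting functions, not from Ehrhart reciprocity or a sum over torsion classes), and the leading coefficient $\abs{T}\cdot\mathrm{vol}_{d-1}(P)$ needs a separate interior-versus-boundary estimate. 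The fatal gap: the entire content of the conjecture (and of the weaker Conjecture~\ref{conj:weak_rank_seqs}) is that the rank sequence is eventually an honest \emph{polynomial}, not a quasi-polynomial, and this is exactly the step you label ``hard'' and then only sketch. The peeling argument via translation by $(1,\dots,1)$ is stated as ``should yield'' a conclusion, with no mechanism identified that forces the constituents of the quasi-polynomial to agree; nothing in the proposal excludes genuinely periodic behavior for large $M$. As written, the proposal is a plausible research program --- and its first half is a useful formalization of Section~\ref{SEC_SYMBOLS} --- but it does not prove the conjecture; the statement remains open after your argument, for the same reason it is left open in the paper.
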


        As mentioned, however, this approach comes with new limitations. We have already mentioned that the multiplicities of the roots are not explicitly present in this lattice structure, even though they are an important component of the rank sequence. Moreover, while the symbolic framework ignores information that does not meaningfully influence the rank, it cannot answer the question of which relations can actually arise for sets of roots which are algebraic numbers. This matters because we usually only care about recurrences with rational or integer coefficients. We showed that for two roots, a rule of the form $\rho_1^7 = \rho_2^7$ is impossible, because otherwise we would have another general rank sequence for sequences of rank $2$. Another possibility may be that there are rules which cannot come on their own, but rather always accompanied by others.

    \newpage
    \section*{Appendix: Rank sequences for sequences of ranks 4 and 5}
    Here, we list some rank sequences for sequences of rank $4$ and $5$. Given the computational cost of computing the rank of higher powers, the data on these and sequences of higher ranks is limited.
    \scriptsize
    \begin{longtable}{|c|l|c|}
             \hline
             Coefficients &Rank Sequence & Eventual Polynomial  \\
            \hline
             $0,0,0,1$ &$4, 4, 4, 4, 4, 4, 4, 4, \dots$ & $4$  \\ 
             $1, -1, 1, -1$ &$4, 5, 5, 5, 5, 5, 5, 5, \dots$ & $5$ \\ 
             $0, 1, 0, -1$ &$4, 6, 6, 6, 6, 6, 6, 6, \dots$ & $6$ \\ 
             $0, 2, 0, -2$ &$4, 6, 8, 8, 8, 8, 8, 8, \dots$ & $8$ \\ 
             $2, 0, -4, 4$&$4, 8, 8, 8, 8, 8, 8, 8, \dots$ & $8$ \\ 
             $0, 3, 0, -3$&$4, 6, 8, 10, 12, 12, 12, 12, \dots$ & $12$ \\ 
             $1, -2, 1, -1$&$4, 8, 11, 12, 12, 12, 12, 12, \dots$ & $12$ \\ 
             $3, 0, -9, 9$&$4, 8, 12, 12, 12, 12, 12, 12, \dots$ & $12$ \\ 
             $2, -2, 4, -4$&$4, 9, 12, 12, 12, 12, 12, 12, \dots$ & $12$ \\ 
             $0, 1, 0, 1$ &$4, 6, 8, 10, 12, 14, 16, 18, \dots$ & $2M + 2$ \\ 
             $1, 0, 1, -1$ &$\boldsymbol{4, 7, 10, 13, 16, 19, 22, 25, \dots}$ & $3M+1$ \\ 
             $1, -2, -1, -1$ &$4, 9, 12, 15, 18, 21, 24, 27, \dots$ & $3M + 3$ \\ 
             $1, 0, 1, 1$ &$4, 8, 12, 16, 20, 24, 28, 32, \dots$ & $4M$ \\ 
             $2, -1, -2, 2$&$4, 9, 14, 18, 22, 26, 30, 34, \dots$ &$4M + 2$ \\ 
             $2, -2, -2, -1$&$4, 9, 16, 20, 24, 28, 32, 36, \dots$ & $4 M+4$ \\ 
             $3, -4, 2, 1$&$4, 10, 18, 25, 30, 35, 40, 45\dots$ & $5M+5$ \\ 
             $2, 1, 2, -1$&$4, 9, 15, 21, 27, 33, 39, 45, \dots$ & $6M - 3$ \\ 
             $5, -2, -5, -1$&$4, 9, 16, 22, 28, 34, 40, 46, \dots$ & $6M - 2$ \\
             $5, -3, -2, -8$&$4, 10, 17, 23, 29, 35, 41, 47 \dots$ & $6M - 1$ \\
             $3, -2, -3, 3$&$4, 9, 16, 23, 30, 36, 42, 48, \dots$ & $6M$ \\
             $2, -1, 0, 3$&$4, 10, 18, 26, 33, 39, 45, 51 \dots$ & $6M+3$ \\
             $3, -2, -3, -1$&$4, 9, 16, 25, 36, 42, 48, 54, \dots$ & $6M+6$ \\
             $1, 2, 2, -4$&$4, 9, 16, 24, 32, 40, 48, 56 \dots$ & $8 M-8$ \\
             $3, -2, -2, 4$&$4, 10, 18, 26, 34, 42, 50, 58 \dots$ & $8M - 6$ \\
             $4, -3, -2, 1$&$4, 10, 19, 28, 36, 44, 52, 60 \dots$ & $8M-4$ \\
             $0, -2, 4, -2$&$4, 10, 20, 32, 44, 54, 64, 72 \dots$ & $8M+8$ \\
             $1, 0, 3, -9$&$4, 9, 16, 25, 36, 48, 60, 72 \dots$ & $12M-24$ \\
             $0,0,5,-5$&$4, 10, 20, 33, 48, 62, 76, 88, 100, 110, 120, \dots$ & $10M+10$ \\ 
             $2, -1, 0, 1$&$4, 10, 19, 30, 42, 54, 66, 78 \dots$ & $12M-18$ \\ 
             $4, -3, -6, 9$&$4, 10, 20, 32, 44, 56, 68, 80 \dots$ & $12M-16$ \\ 
             $1, 1, 0, -6$&$4, 10, 20, 34, 50, 65, 78, 90, 102, \dots$ & $12M-6$ \\
             $4, -5, 2, 1$ &$4, 10, 20, \dots, 116, 130, 144, 156,\dots$ & $12M+12$ \\
             $3, 4, 2, -8$&$4, 10, 20, 33, 48, 64, 80, 96, 112, \dots$ & $16M-32$ \\
             $8, -7, -6, 1$&$4, 10, 20, 34, 51, 68, 84, 100, 116, \dots$ & $16M-28$ \\
             $1, 0, -1, -1$&$\boldsymbol{4, 9, 16, 25, 36, 49, 64, 81, \dots}$ & $M^2+2 M+1$ \\
             $2, 0, 0, -1$&$4, 10, 19, 31, 46, 64, 85, 109, \dots$ & $\frac{3 M^2}{2}+\frac{3 M}{2}+1$ \\
             $2, -1, 0, -1$&$4, 10, 20, 34, 52, 74, 100, 130, \dots$ & $2M^2 + 2$ \\
             $1, 1, 0, 1$&$4, 10, 20, 35, 56, 83, 116, 155, \dots$ & $3 M^2-6 M+11$ \\
             $2, 0, 0, -2$&$4, 10, 20, 35, 56, 84, 120, 164, \dots$ & $4 M^2-16 M+36$ \\
             $1, 2, 0, -3$ &$4, 10, 20, \dots, 220, 286, 364, 454, 556, \dots$ & $6 M^2-48 M+166$ \\
             $-3, 39, 47, 210$&$\boldsymbol{4, 10, 20, \dots, 220, 286, 364, 455, \dots}$ & $\frac{M^3}{6}+M^2+\frac{11 M}{6}+1$ \\
            \hline
        
             $0, 0, 0, 0, 1$ &$5, 5, 5, 5, 5, 5, 5, 5, \dots$ & $5$ \\ 
             $1, -1, 1, -1, 1$ &$5, 6, 6, 6, 6, 6, 6, 6, \dots$ & $6$ \\ 
             $1, 0, 0, -1, 1$ &$5, 8, 8, 8, 8, 8, 8, 8, \dots$ & $8$ \\ 
             $2, -2, 2, -2, 1$&$5, 9, 10, 10, 10, 10, 10, 10, \dots$ & $10$ \\ 
             $0, -1, 1, 0, 1$&$5, 10, 12, 12, 12, 12, 12, 12, \dots$ & $12$ \\ 
             $1, -1, -1, 1, -1$&$5, 9, 12, 15, 18, 21, 24, 27, \dots$ & $3M + 3$ \\
             $1, 0,0, 1, -1$&$\boldsymbol{5, 9, 13, 17, 21, 25, 29, 33, \dots}$ & $4M+1$ \\
             $1, 1, -1, 2, -2$&$5, 10, 14, 18, 22, 26, 30, 34, \dots$ & $4M+2$ \\
             $1, -1, -1, 0, 2$&$5, 11, 16, 20, 24, 28, 32, 36,\dots$ & $4M+4$ \\
             $1, 1, 1, 1, 2$&$5, 10, 15, 20, 25, 30, 35, 40, \dots$ & $5M$ \\
             $0, 1, 1, 0, -1$&$5, 11, 17, 23, 29, 35, 41, 47, \dots$ & $6M-1$\\
             $1, 2, -2, -2, 2$&$5, 11, 19, 27, 35, 43, 51, 59, \dots$ & $8M - 5$ \\
             $1, 1, -1, 1, -1$&$5, 11, 19, 28, 36, 44, 52, 60, \dots$ & $8M-4$ \\
             $2, -1, 1, 0, -1$&$5, 12, 20, 28, 36, 44, 52, 60, \dots$ & $8M-4$ \\
             $1, 1, 1, -1, -1$&$5, 12, 22, 34, 46, 58, 70, 82,\dots$ & $12M-14$ \\
             $0,0,1, -2, 2$&$5, 14, 29, 45, 58, 70, 82, 94, \dots$ & $12M-2$ \\
             $2, -1, -2, 1, -1$&$5, 15, 33, 57, 83, 108, 130, 150, \dots$ & $20M-10$ \\
             $1, 1, -1, -2, 2$&$5, 11, 19, 29, 41, 55, 71, 89, \dots$ & $M^2+3 M+1$ \\
             $1, 2, -1, 1, 2$&$\boldsymbol{5, 12, 22, 35, 51, 70, 92, 117, \dots}$ & $\frac{3 M^2}{2}+\frac{5 M}{2}+1$ \\
             $2, 0, 0, 2, -1$&$5, 13, 25, 41, 61, 85, 113, 145,\dots$ & $2 M^2+2 M+1$ \\
             $1, 1, 0,0, -1$&$5, 14, 29, 50, 77, 110, 149, 194, \dots$ & $3 M^2+2$\\
             $2, 0, -2, 0, 1$&$5, 14, 30, 54, 86, 126, 174, 230, \dots$ & $4 M^2-4 M+6$ \\
             $1, 0, 2, 0, 1$&$5, 15, 33, 60, 96, 141, 195, 258, \dots$ & $\frac{9 M^2}{2}-\frac{9 M}{2}+6$ \\
             $1, -2, -2, 2, 2$&$5, 15, 35, 67, 111, 165, 229, 301,\dots$ & $4 M^2+12 M-51$ \\
             $1, -1, 0, 0, -1$&$5, 14, 30, 55, 91, 139, 199, 271, \dots$ & $6 M^2-18 M+31$ \\
             $2, 1, -2, -1, -1$&$5, 15, 35, 67, 111, 167, 235, 315, \dots$ & $6 M^2-10 M+11$ \\
             $1, 1, -1, -1, -1$&$5, 15, 35, 68, 116, 180, 260, 356, \dots$ & $8 M^2-24 M+36$ \\
             $1, 0, 0, 0, 1$&$5, 15, 34, 65, 111, 174, 255, 354, \dots$ & $9 M^2-36 M+66$ \\
             $2, 1, 0, 0, -2$&$\boldsymbol{5, 14, 30, 55, 91, 140, 204, 285, \dots}$ & $\frac{M^3}{3}+\frac{3 M^2}{2}+\frac{13 M}{6}+1$ \\
             $1, 0, -1, 0, -1$&$5, 15, 35, 69, 121, 194, 290, 410, \dots$ & $12 M^2-60 M+122$ \\
             $1, 1, 0, 0, 2$&$5, 15, 34, 65, 111, 175, 260, 369, \dots$ & $\frac{M^3}{2}+\frac{3 M^2}{2}+2 M+1$ \\
             $1, 0, -1, 0, 1$&$5, 15, 35, 69, 121, 195, 295, 425, \dots$ & $\frac{2 M^3}{3}+M^2+\frac{7 M}{3}+1$ \\
             $1, -1, 2, 0, 1$&$5, 15, 35, 70, 126, 209, 325, 480, \dots$ & $M^3-\frac{3 M^2}{2}+\frac{17 M}{2}-4$ \\
             $2, 0, 0, 0, -1$&$5, 15, 35, 70, 126, 210, 330, 494, \dots$ & $\frac{4 M^3}{3}-6 M^2+\frac{86 M}{3}-34$\\
             $2, 0, 0, -1, -1$&$5, 15, 35, 70, \dots, 1001, 1365, 1819, \dots$ & $ 2 M^3-21 M^2+143 M-329$ \\
             $3, 23, -51, -94, 120$&$\boldsymbol{5, 15, 35, 70, \dots, 1001, 1365, 1820, \dots}$ &$\frac{M^4}{24}+\frac{5 M^3}{12}+\frac{35M^2}{24}+\frac{25 M}{12}+1$\\
            \hline
    \end{longtable}
\end{document}